\newcommand\mypagesizel{
\textwidth= 6.5in
\textheight=9in
\voffset-.55in
\hoffset -0.75in
\marginparwidth=56pt
}
\renewcommand{\phi}{\varphi}
\newcommand{\into}{\hookrightarrow}
\renewcommand{\le}{\leqslant}
\renewcommand{\ge}{\geqslant}
\newcommand{\bC}{\textup{\textbf{C}}}
\newcommand{\bN}{\textbf{N}}
\newcommand{\bP}{\textup{\textbf{P}}}
\newcommand{\bZ}{\textup{\textbf{Z}}}
\newcommand{\cH}{\mathcal{H}}
\newcommand{\frS}{\mathfrak{S}}
\newcommand{\sE}{\mathscr{E}}
\newcommand{\sF}{\mathscr{F}}
\newcommand{\sG}{\mathscr{G}}
\newcommand{\sL}{\mathscr{L}}
\newcommand{\sM}{\mathscr{M}}
\newcommand{\sO}{\mathscr{O}}
\newcommand{\sQ}{\mathscr{Q}}
\newcommand{\codim}{\textup{codim}}
\newtheorem{thm}{Theorem}
\newtheorem*{thma}{Theorem A}
\newtheorem*{thmb}{Theorem B}
\newtheorem{lemma}[thm]{Lemma}
\newtheorem{cor}[thm]{Corollary}
\newtheorem{prop}[thm]{Proposition}
\newtheorem*{thm*}{Theorem}
\theoremstyle{definition}
\newtheorem{notation}[thm]{Notation}
\newtheorem{defn-thm}[thm]{Definition-Theorem}
\theoremstyle{remark}
\newtheorem*{acknowledgments}{Acknowledgments}
\newtheorem*{not-and-def}{Notation and definitions}
\begin{document}

\title[Characterizations of projective spaces and hyperquadrics]{
Characterizations of projective spaces and hyperquadrics}

\author{St\'ephane Druel}

\author{Matthieu Paris}

\address{Institut Fourier, UMR 5582 du
  CNRS, Universit\'e Grenoble 1, BP 74, 38402 Saint Martin
  d'H\`eres, France} 

\email{druel@ujf-grenoble.fr}

\email{Matthieu.Paris@ujf-grenoble.fr}

\thanks{The first named author was partially supported by the A.N.R}

\subjclass[2000]{14M20}

\maketitle

\section{Introduction}

Projective spaces and hyperquadrics are the simplest projective
algebraic varieties, and they can be characterized in many ways.  The
aim of this paper is to provide a new characterization of them in
terms of positivity properties of the tangent bundle. 
We refer the reader to the article \cite{adk08} which reviews these matters. Notice that
our results generalize
Mori's (see \cite{mori79}), Wahl's (see \cite{wahl} and \cite{druel04}),
Andreatta-Wi\'sniewski's (see \cite{andreatta_wisniewski} and \cite{araujo06}),
Araujo-Druel-Kov\'acs's (see \cite{adk08}) and Paris's (see \cite{paris})
characterizations of projective spaces and hyperquadrics.
K. Ross recently posted
a somewhat related result (see \cite{ross}).

\medskip

In this paper, we prove the following theorems. Here $Q_n$ denotes a smooth
quadric hypersurface in $\bP^{n+1}$, and $\sO_{Q_{n}}(1)$ denotes the 
restriction of $\sO_{\bP^{n+1}}(1)$ to $Q_n$.
When $n=1$, $(Q_{1},\sO_{Q_{1}}(1))$ is just $(\bP^{1},\sO_{\bP^{1}}(2))$.

\begin{thma}
Let $X$ be a smooth complex projective $n$-dimensional variety 
and $\sE$ an ample vector bundle on $X$ of rank $r+k$ with $r\ge 1$ and $k\ge 1$.
If 
$h^0(X,T_{X}^{\otimes r}\otimes \det(\sE)^{\otimes -1})\neq 0$, then
$(X,\det(\sE))\simeq (\bP^{n},\sO_{\bP^{n}}(l))$ with
$r+k\le l\le \frac{r(n+1)}{n}$.
\end{thma}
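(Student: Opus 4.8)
Write $L:=\det(\sE)$. Since $\sE$ is ample, $L$ is an ample line bundle, and the hypothesis $h^0(X,T_X^{\otimes r}\otimes L^{-1})\ne 0$ is the same as a nonzero sheaf homomorphism $\phi\colon L\to T_X^{\otimes r}$, i.e.\ an ample sub-line-sheaf of $T_X^{\otimes r}$. The one elementary consequence of ampleness I will use repeatedly is that for every irreducible curve $C\subset X$ with normalization $f\colon\bP^1\to X$ the bundle $f^*\sE$ splits into $r+k$ line bundles of positive degree, so that
\[
L\cdot C=\deg f^*\det(\sE)\ge r+k .
\]
The plan is to prove first that $X\cong\bP^n$, and then to recover the range of $l$ by a cohomological computation on $\bP^n$; the assumptions $r\ge 1$ and $k\ge 1$ will be used precisely to force the anticanonical degree of a minimal rational curve to be as large as possible, which is exactly what characterizes the projective space.

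I first claim $X$ is uniruled. If not, Miyaoka's generic semipositivity theorem gives, for a general curve $C$ cut out by $n-1$ general members of $|mA|$ (with $A$ ample and $m\gg 0$), that $\Omega^1_X|_C$ is nef; equivalently $f^*T_X=\bigoplus_i\sO(a_i)$ with all $a_i\le 0$, so every direct summand of $f^*T_X^{\otimes r}$ has degree $\le 0$. Because $C$ is a general member of a very ample system, the nonzero section defining $\phi$ stays nonzero on $C$, producing a nonzero map $\sO(L\cdot C)\to f^*T_X^{\otimes r}$ with $L\cdot C>0$ into a bundle all of whose summands have nonpositive degree --- impossible. Hence $X$ is uniruled, and I fix a minimal dominating family $H$ of rational curves with general member $C$ free; writing $f^*T_X=\sO(2)\oplus\sO(1)^{\oplus p}\oplus\sO^{\oplus(n-1-p)}$ one has $-K_X\cdot C=p+2$.

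The crux is to show $-K_X\cdot C\ge n+1$, equivalently that $f^*T_X$ has no trivial summand. Restricting $\phi$ to a general $C\in H$ through a general point (so that the section is nonzero on $C$) yields a nonzero map $\sO(L\cdot C)\to f^*T_X^{\otimes r}$, whose image lies in the sum of those summands $\sO(a_{i_1}+\cdots+a_{i_r})$ of degree $\ge L\cdot C\ge r+k$. A single curve only gives $L\cdot C\le 2r$, so the point must be argued on the universal family $e\colon U\to X$, $\pi\colon U\to H$: the fixed tensor $s_x\in(T_{X,x})^{\otimes r}$ cannot be maximally aligned with one splitting direction simultaneously for the positive-dimensional family of curves through $x$, so the ample subsheaf $L$ is forced to be fed by many of the $a_i$ at once, making $\sum_i a_i$ --- and hence $p$ --- large. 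The natural quantitative target is the family-wide inequality $n\,(L\cdot C)\le r\,(-K_X\cdot C)$ (which on $\bP^n$ reads exactly $nl\le r(n+1)$); combined with $L\cdot C\ge r+k$ it gives $-K_X\cdot C\ge\frac{n}{r}(r+k)=n+\frac{nk}{r}>n$, hence $-K_X\cdot C\ge n+1$. By the theorem of Cho--Miyaoka--Shepherd-Barron in Kebekus's form this forces $X\cong\bP^n$ with $C$ a line. I expect this degree estimate --- upgrading the one-curve bound to the inequality $n\,(L\cdot C)\le r\,(-K_X\cdot C)$ --- to be the main obstacle.

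It then remains to determine $l$, where $\det(\sE)\cong\sO_{\bP^{n}}(l)$. Restricting $f^*\sE=\bigoplus_j\sO(b_j)$ with $b_j\ge 1$ to a line gives $l=\sum_j b_j\ge r+k$, which is the lower bound. For the upper bound the hypothesis becomes $H^0(\bP^n,T_{\bP^n}^{\otimes r}(-l))\ne 0$, and a standard computation with the Euler sequence and Bott's formula (using $T_{\bP^n}=\sO_{\bP^n}(1)\otimes Q$ for the universal quotient $Q$) shows this group is nonzero precisely when $nl\le r(n+1)$, the extremal value being realized by the summand $(\det T_{\bP^n})^{\otimes r/n}=\sO_{\bP^n}\!\big(r(n+1)/n\big)$ when $n\mid r$. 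This yields $r+k\le l\le\frac{r(n+1)}{n}$ and completes the argument.
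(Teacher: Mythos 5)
There is a genuine gap at the step you yourself flag as ``the main obstacle'': the family-wide inequality $n\,(L\cdot C)\le r\,(-K_X\cdot C)$ for a minimal dominating family $H$ is asserted but never proved, and it is essentially the whole theorem in disguise. The only honest consequence of restricting the section to a single minimal curve is $L\cdot C\le 2r$, because $f^*T_X\simeq\sO_{\bP^1}(2)\oplus\sO_{\bP^1}(1)^{\oplus p}\oplus\sO_{\bP^1}^{\oplus(n-1-p)}$ is very far from semistable, so the image of $L|_C$ may well concentrate entirely on the $\sO_{\bP^1}(2)$ summand; no slope argument applies. Your remedy --- that the fixed tensor $s_x\in T_{X,x}^{\otimes r}$ ``cannot be maximally aligned with one splitting direction simultaneously for the positive-dimensional family of curves through $x$'' --- is a heuristic, not an argument: making it precise is exactly the kind of VMRT/alignment analysis (in the spirit of Hwang's Proposition~2.3, which the paper does invoke, but only to conclude that $T_{\ell,x}$ is independent of $\ell\ni x$ in a degenerate situation, not to produce a degree estimate) that occupies the technical core of results of this type. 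Until that inequality is established, the conclusion $-K_X\cdot C\ge n+1$, and hence the appeal to Cho--Miyaoka--Shepherd-Barron, is unsupported. A smaller issue: for the upper bound $l\le r(n+1)/n$ on $\bP^n$ you only exhibit the extremal section when $n\mid r$; the general case needs the decomposition $r=an+b$ and the section of $[\det(T_{\bP^n})(-(n{+}1))]^{\otimes a}\otimes T_{\bP^n}^{\otimes b}(-b)$ as in Lemma~\ref{lemma:twisted_sections_projective_space}, while the necessity of $nl\le r(n+1)$ uses semistability of $T_{\bP^n}^{\otimes r}$, not Bott's formula.

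For comparison, the paper takes an entirely different route that avoids any such uniform degree estimate: it first treats the case $\rho(X)=1$ by applying the Araujo--Druel--Kov\'acs destabilizing-subsheaf argument (the maximally destabilizing subsheaf $\sE'\subset T_X$ has slope at least $\mu(\det(\sE))/r$, and one either lands in Wahl's theorem, in the ample-subsheaf criterion, or --- only when $k=0$ --- in the quadric case via Kobayashi--Ochiai); for $\rho(X)\ge 2$ it argues by induction on $\dim(X)$ using the $H$-rationally connected quotient, extends it in codimension one, restricts to a general complete intersection curve in the base, and derives a contradiction from the fibration lemmas (Lemmas~\ref{lemma:bundle_over_line}, \ref{lemma:fibration_over_curve_projective_space} and Proposition~\ref{proposition:fibration_over_curve_quadric}), which ultimately rest on the non-ampleness of $-K_{X/C}$ for fibrations over curves. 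Your strategy, if the alignment inequality could be proved, would be shorter and would bypass the induction, but as written the central step is missing.
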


\begin{thmb}
Let $X$ be a smooth complex projective $n$-dimensional variety and
$\sE$ an ample vector bundle on $X$ of rank $r\ge 1$.  If 
$h^0(X,T_{X}^{\otimes r}\otimes \det(\sE)^{\otimes -1})\neq 0$, then
either $(X,\det(\sE))\simeq (\bP^{n},\sO_{\bP^{n}}(l))$ with $r \le l\le \frac{r(n+1)}{n}$,
or $(X,\sE)\simeq(Q_{n},\sO_{Q_{n}}(1)^{\oplus r})$ and $r=2i+nj$ with $i \ge 0$ and $j\ge 0$.
\end{thmb}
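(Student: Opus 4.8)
The section $0\neq s\in H^0(X,T_{X}^{\otimes r}\otimes\det(\sE)^{\otimes-1})$ is the same datum as a nonzero sheaf map $\det(\sE)\to T_{X}^{\otimes r}$, and since $\sE$ is ample of rank $r$ the line bundle $\det(\sE)$ is ample; thus a tensor power of $T_{X}$ contains an ample line subsheaf. The plan is first to deduce that $X$ is Fano and to bound its pseudoindex from below. Restricting $s$ to a general minimal rational curve $C$, with $f\colon\bP^{1}\to X$ and $f^{*}T_{X}=\bigoplus\sO(a_{i})$, $a_{1}=2\ge a_{2}\ge\cdots\ge a_{n}\ge 0$, $\sum a_{i}=-K_{X}\cdot C$, and using $\det(\sE)\cdot C\ge r$ (ampleness in rank $r$), I would control the splitting type and show $i_{X}\ge n$. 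I then invoke the Cho--Miyaoka--Shepherd-Barron/Kebekus characterization: a Fano $n$-fold with $i_{X}\ge n$ is $\bP^{n}$ or $Q_{n}$. This is precisely the step at which the hypothesis ``rank $=r$'' (rather than Theorem A's ``rank $>r$'') matters: here $\det(\sE)\cdot C\ge r$ can be sharp, so $Q_{n}$ is no longer excluded.

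In the case $X\cong\bP^{n}$ one has $\det(\sE)\cong\sO_{\bP^{n}}(l)$. Ampleness of the rank-$r$ bundle $\sE$ forces $\det(\sE)\cdot(\textup{line})\ge r$, i.e. $l\ge r$. For the upper bound, $T_{\bP^{n}}$ is stable of slope $\mu=(n+1)/n$, hence $T_{\bP^{n}}^{\otimes r}$ is semistable of slope $r(n+1)/n$, and the nonzero map from the line bundle $\det(\sE)$ into a semistable bundle gives $l\le r(n+1)/n$. This reproduces, by the same slope argument, the conclusion of Theorem A in the boundary case $k=0$.

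The case $X\cong Q_{n}$ carries the new content. I would first pin down $\sE$: a line $\ell$ gives $\det(\sE)\cdot\ell\ge r$, while the cohomology computation below yields $h^{0}(T_{Q_{n}}^{\otimes r}\otimes\sO_{Q_{n}}(-m))=0$ for every $m>r$, so the section forces $\det(\sE)\cong\sO_{Q_{n}}(r)$. Then $\sE|_{\ell}$ is ample of rank $r$ with $\deg\det=r$, hence $\sE|_{\ell}\cong\sO_{\ell}(1)^{\oplus r}$ on every line; as the lines cover $Q_{n}$ connectedly, $\sE\otimes\sO_{Q_{n}}(-1)$ is trivial on all of them and $\sE\cong\sO_{Q_{n}}(1)^{\oplus r}$. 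The hypothesis becomes $h^{0}\bigl((T_{Q_{n}}(-1))^{\otimes r}\bigr)\neq 0$. The structural fact to exploit is that $V:=T_{Q_{n}}(-1)$ is a self-dual (orthogonal) bundle of rank $n$ with $\det V\cong\sO_{Q_{n}}$: the isomorphism $V\cong V^{*}$ coming from the quadratic form defining $Q_{n}$ is a nonzero section of $V^{\otimes 2}$, and the trivialization of $\det V=\wedge^{n}V\subset V^{\otimes n}$ is a nonzero section of $V^{\otimes n}$. Tensoring these produces nonzero sections of $V^{\otimes r}$ for every $r=2i+nj$ with $i,j\ge 0$, giving one direction.

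It then remains to show these are the only such $r$, i.e. $h^{0}(V^{\otimes r})=0$ when $r\notin\langle 2,n\rangle$ (the odd $r<n$). Restricting to a line is useless here, since $V|_{\ell}=\sO(1)\oplus\sO^{\oplus(n-2)}\oplus\sO(-1)$ contributes a summand $\sO(1)^{\otimes r}=\sO(r)$ to $V^{\otimes r}|_{\ell}$, so the vanishing is genuinely global. My plan is to compute $H^{0}(Q_{n},V^{\otimes r})$ from the homogeneous structure $Q_{n}=SO(n+2)/P$ by decomposing $V^{\otimes r}$ into irreducible equivariant bundles and applying Borel--Weil--Bott; conceptually the sections correspond to the $SO(n)$-invariant tensors in the standard representation, which by the First Fundamental Theorem of orthogonal invariant theory are generated exactly by the form (degree $2$) and the determinant (degree $n$)---note that for $n$ even the determinant contributes only even degrees, matching $\langle 2,n\rangle=2\mathbb{Z}_{\ge 0}$, while for $n$ odd it supplies the odd degrees $\ge n$. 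Carrying out this Bott/invariant-theory computation, and upstream establishing the pseudoindex bound $i_{X}\ge n$ that launches the argument, are the two points I expect to be the main obstacles.
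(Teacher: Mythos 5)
Your endgame matches the paper's closely. On $\bP^n$, the stability/slope argument giving $r\le l\le \frac{r(n+1)}{n}$ is exactly Lemma \ref{lemma:twisted_sections_projective_space}; on $Q_n$, forcing $\det(\sE)\simeq\sO_{Q_n}(r)$, deducing $\sE\simeq\sO_{Q_n}(1)^{\oplus r}$ from $\sE_{|\ell}\simeq\sO_{\ell}(1)^{\oplus r}$ on all lines (the paper cites \cite[Proposition 1.2]{andreatta_wisniewski} for this globalization), and identifying $H^0(Q_n,(T_{Q_n}(-1))^{\otimes r})$ with the $SO(n)$-invariants of $V^{\otimes r}$ via homogeneity and Weyl's first fundamental theorem is exactly Lemma \ref{lemma:quadric_twisted_sections}. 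These parts are sound and not the issue.

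The genuine gap is in your first step, which is where essentially all of the paper's work lies. The route you propose --- restrict $s$ to a general minimal rational curve, control the splitting type, deduce $i_X\ge n$, then invoke a Cho--Miyaoka--Shepherd-Barron/Kebekus-type characterization --- does not go through as described. Writing $f^*T_X\simeq \sO_{\bP^1}(2)\oplus\sO_{\bP^1}(1)^{\oplus d}\oplus\sO_{\bP^1}^{\oplus(n-d-1)}$, the nonvanishing of $s_{|C}$ only requires some rank-one summand of $f^*T_X^{\otimes r}$ of degree at least $\det(\sE)\cdot C$; since $\sO_{\bP^1}(2)^{\otimes i}\otimes\sO_{\bP^1}^{\otimes(r-i)}$ already has degree $2i\ge r$ for suitable $i$ \emph{regardless of $d$}, all you extract is $r\le\det(\sE)\cdot C\le 2r$ and no lower bound on $d$, hence no pseudoindex bound. (Even a bound for the minimal family would not control $-K_X\cdot C$ for arbitrary rational curves, which is what a pseudoindex hypothesis requires.) The paper does not prove $i_X\ge n$ at all: it handles $\rho(X)=1$ via the maximally destabilizing subsheaf of $T_X$ together with Wahl's theorem and Kobayashi--Ochiai (Proposition \ref{proposition:picard_number_one}), and the case $\rho(X)\ge 2$ by induction on dimension through the $H$-rationally connected quotient, its extension in codimension one, and the non-existence results for fibrations over curves (Lemmas \ref{lemma:bundle_over_line} and \ref{lemma:fibration_over_curve_projective_space}, Proposition \ref{proposition:fibration_over_curve_quadric}), all feeding into Theorem \ref{theorem:main}. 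Until you supply an actual argument that $X\simeq\bP^n$ or $X\simeq Q_n$, the proof is incomplete at its central point.
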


The line of argumentation follows \cite{andreatta_wisniewski} (see also \cite{adk08} and \cite{paris}). 
We first prove Theorem A and Theorem B for Fano manifolds with Picard number $\rho(X)=1$ (see Proposition
\ref{proposition:picard_number_one}).
Then the argument for the proof of the main
Theorem goes as follows. We argue by induction on $\dim(X)$.
We may assume $\rho(X) \ge 2$. Hence the
$H$-rationally connected quotient of $X$ with respect to an unsplit covering 
family $H$ of rational curves on $X$ is non-trivial. It can be extended in codimension one so that 
we can produce a normal
variety $X_B$ equipped with a surjective morphism $\pi_B$ with integral fibers onto a smooth curve $B$
such that either
$B\simeq \bP^1$,
$X_B\to B$ is a $\bP^d$-bundle for some $d\ge 1$
and 
$h^0(X_B,T_{X_B/\bP^1}^{\otimes i}\otimes \pi^*\sG^{\otimes r-i}\otimes
\det(\sE)_{|X_B}^{\otimes -1})\neq 0$ for some integer $1\le i\le r$
where $\sG$ be a vector bundle on $\bP^1$ such that $\sG^*(2)$ is nef, or
$X_B\to B$ is a $\bP^d$-bundle for some $d\ge 1$ and 
$h^0(X_B,T_{X_B/B}^{\otimes r}\otimes \det(\sE)_{|X_B}^{\otimes -1}\otimes \pi_B^*\sG^*)\neq 0$
where $\sG$ is a nef vector bundle on $C$, or
the geometric generic fiber of $\pi_B$ is isomorphic to a smooth hyperquadric and 
$h^0(X_B,T_{X_B/B}^{[\otimes r]}\otimes \det(\sE)_{|X_B}^{\otimes -1}\otimes \pi_B^*\sG^*)\neq 0$
where $\sG$ is a nef vector bundle on $C$.
But this is impossible unless $X\simeq\bP^1\times \bP^1$ (see Lemma \ref{lemma:bundle_over_line}, 
Lemma \ref{lemma:fibration_over_curve_projective_space}
and Proposition \ref{proposition:fibration_over_curve_quadric}).
\medskip

Throughout this paper we work over the field of complex numbers. 

\begin{acknowledgments}We are grateful to Nicolas \textsc{Perrin} for very fruitful discussions.
\end{acknowledgments}

\section{Proofs}

\subsection{Projective spaces and hyperquadrics}In this section, we gather some properties of the tangent bundle to projective spaces and smooth hyperquadrics.

\begin{lemma}\label{lemma:twisted_sections_projective_space}
Let $n$, $r$ and $k$ be integers with $n \ge 1$ and $r \ge 1$. Then
$h^0(\bP^n,T_{\bP^n}^{\otimes r}(-k))\neq 0$ if and only if $k \le \frac{r(n+1)}{n}$. 
\end{lemma}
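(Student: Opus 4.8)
The plan is to pin down the sharp integer threshold and reduce both implications to it. Write $r=qn+s$ with integers $q\ge 0$ and $0\le s\le n-1$, and set $k_0:=q(n+1)+s$. Since $\frac{r(n+1)}{n}=q(n+1)+s+\frac{s}{n}$ and $0\le s<n$, the integer part of $\frac{r(n+1)}{n}$ is exactly $k_0$; as $k$ is an integer, the condition $k\le \frac{r(n+1)}{n}$ is equivalent to $k\le k_0$. I would first observe that it suffices to produce a single nonzero section of $T_{\bP^n}^{\otimes r}(-k_0)$: for any $k\le k_0$ we have $T_{\bP^n}^{\otimes r}(-k)=T_{\bP^n}^{\otimes r}(-k_0)\otimes \sO_{\bP^n}(k_0-k)$, and multiplying a section of the first factor by a nonzero section of the globally generated line bundle $\sO_{\bP^n}(k_0-k)$ gives, on the integral variety $\bP^n$, a nonzero section of $T_{\bP^n}^{\otimes r}(-k)$. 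Thus the ``if'' direction reduces to the case $k=k_0$.

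To exhibit a section at $k=k_0$, I would use the Schur--Weyl decomposition in characteristic zero. Applied to the rank-$n$ bundle $T_{\bP^n}$, the antisymmetrization idempotent $\frac{1}{n!}\sum_{\sigma}\mathrm{sgn}(\sigma)\,\sigma$ realizes $\bigwedge^n T_{\bP^n}=\det(T_{\bP^n})=\sO_{\bP^n}(n+1)$ as a direct summand of $T_{\bP^n}^{\otimes n}$. Hence $\big(\bigwedge^n T_{\bP^n}\big)^{\otimes q}\otimes T_{\bP^n}^{\otimes s}=\sO_{\bP^n}(q(n+1))\otimes T_{\bP^n}^{\otimes s}$ is a direct summand of $T_{\bP^n}^{\otimes r}$. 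Twisting by $\sO_{\bP^n}(-k_0)$ and using $q(n+1)-k_0=-s$, this summand becomes $\big(T_{\bP^n}(-1)\big)^{\otimes s}$. Since $T_{\bP^n}(-1)$ is globally generated by the Euler sequence, so is its $s$-th tensor power, which therefore has a nonzero section; this yields the required section of $T_{\bP^n}^{\otimes r}(-k_0)$ (for $s=0$ the summand is simply $\sO_{\bP^n}$ and the claim is trivial).

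For the converse I would argue by slopes. The tangent bundle $T_{\bP^n}$ is stable with respect to $\sO_{\bP^n}(1)$, of slope $\mu(T_{\bP^n})=\frac{n+1}{n}$. In characteristic zero a tensor power of a semistable sheaf is again semistable, so $T_{\bP^n}^{\otimes r}$ is semistable of slope $\frac{r(n+1)}{n}$. A nonzero element of $h^0\big(T_{\bP^n}^{\otimes r}(-k)\big)$ is a nonzero map $\sO_{\bP^n}(k)\to T_{\bP^n}^{\otimes r}$; as $\bP^n$ is integral and $T_{\bP^n}^{\otimes r}$ is torsion-free, this map is injective with image isomorphic to $\sO_{\bP^n}(k)$. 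Thus $\sO_{\bP^n}(k)$ is a subsheaf of the semistable bundle $T_{\bP^n}^{\otimes r}$, and comparing slopes forces $k\le \frac{r(n+1)}{n}$.

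I expect the ``only if'' direction to be the main obstacle, since it rests entirely on the semistability of $T_{\bP^n}^{\otimes r}$, which in turn depends on the nontrivial characteristic-zero theorem that tensor products of semistable sheaves remain semistable. Once that input is granted, the slope comparison is immediate and produces precisely the sharp bound $\frac{r(n+1)}{n}$; the arithmetic identity $\lfloor \frac{r(n+1)}{n}\rfloor=k_0=q(n+1)+s$ then confirms that the existence range found in the first two paragraphs is exactly optimal.
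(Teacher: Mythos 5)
Your proposal is correct and follows essentially the same route as the paper: the ``only if'' direction uses stability of $T_{\bP^n}$ and semistability of its tensor powers (the paper cites the same Huybrechts--Lehn theorem) to compare slopes, and the ``if'' direction writes $r=qn+s$ and extracts $\det(T_{\bP^n})^{\otimes q}\otimes T_{\bP^n}^{\otimes s}$ as a direct summand of $T_{\bP^n}^{\otimes r}$, reducing to the global generation of $T_{\bP^n}(-1)$. Your explicit reduction to the integer threshold $k_0=\lfloor r(n+1)/n\rfloor$ is a slightly cleaner packaging of the paper's chain of $h^0$ inequalities, but the underlying ideas are identical.
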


\begin{proof}It is well-known that $T_{\bP^n}$ is stable in the sense of 
Mumford-Takemoto with slope $\mu(T_{\bP^n})=\frac{n+1}{n}$ with respect to
$\sO_{\bP^n}(1)$. 
By \cite[Theorem 3.1.4]{HuyLehn}, $T_{\bP^n}^{\otimes r}(-r)$ is semistable with slope
$\mu(T_{\bP^n}^{\otimes r}(-k))=\frac{r(n+1)}{n}-k$.
It follows that if $h^0(\bP^n,T_{\bP^n}^{\otimes r}(-k))\neq 0$
then $k \le \frac{r(n+1)}{n}$. Conversely, let us assume that $k \le \frac{r(n+1)}{n}$.
Write $r=an+b$ where $a$ and $b$ are integers with $a \ge 0$ and
$0\le b<n$. Then 
$k-a(n+1)=\llcorner k-a(n+1)\lrcorner
\le \llcorner \frac{b(n+1)}{n} \lrcorner 
=\llcorner b+\frac{b}{n}\lrcorner=b$ and
\begin{eqnarray*}
h^0(\bP^n,T_{\bP^n}^{\otimes r}(-k) & = &
h^0(\bP^n,T_{\bP^n}^{\otimes an}(-a(n+1))\otimes T_{\bP^n}^{\otimes b}(-k+a(n+1)))\\
& \ge & h^0(\bP^n,[T_{\bP^n}^{\otimes n}(-(n+1))]^{\otimes a}\otimes T_{\bP^n}^{\otimes b}(-b))\\
& \ge & h^0(\bP^n,[\det(T_{\bP^n})(-(n+1))]^{\otimes a}\otimes T_{\bP^n}^{\otimes b}(-b))\\
& = & h^0(\bP^n,T_{\bP^n}^{\otimes b}(-b)) \ge 1,
\end{eqnarray*}
as claimed.
\end{proof}

Let $d$ be a positive integer. Let $Q\subset \bP^{d+1}=\bP(W)$ be a smooth hyperquadric
defined by a non degenerate quadratic form $q$ on $W:=\bC^{d+2}$
and let $\sO_Q(1)$ denote the 
restriction of $\sO_{\bP^{d+1}}(1)$ to $Q$.
Let $x$ be a point of $Q$ and $w\in W\setminus\{0\}$ representing $x$; then $T_Q(-1)_x$
identifies with $x^{\perp}/<x>$ and $q$ induces an isomorphism 
$T_Q(-1)\simeq \Omega_Q^1(1)$ or equivalently a nonzero section in
$H^0(Q,(T_Q(-1))^{\otimes 2})$ still denoted by $q$. Let $V:=x^{\perp}/<x>$.
Let $G:=SO(W)$ and let $P\subset SO(W)$ be the parabolic subgroup such that
$G/P\simeq Q$ corresponding to $x\in Q$. Let $\det\in H^0(Q,\det(T_Q(-1))$ be a nonzero section.

\begin{lemma}\label{lemma:quadric_twisted_sections}
Let the notations be as above.
\begin{enumerate}
\item The vector bundle $T_Q$ is stable in the sense of Mumford-Takemoto; in particular, one has
$h^0(Q,T_Q^{\otimes r}(-k))=0$ for $k>r\ge 1$.
\item The space of sections $H^0(Q,(T_Q(-1))^{\otimes r})$ is generated as a $\bC$-vector space 
by the $\sigma\cdot q^{\otimes i}\otimes \det ^{\otimes j}$'s where $i$ and $j$ are nonnegative integers such that
$r=2i+dj$ and $\sigma \in
\frS_r$ the symmetric group on $r$ letters acting as usual on the vector bundle $(T_Q(-1))^{\otimes r}$.
\end{enumerate}
\end{lemma}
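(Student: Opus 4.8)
The plan is to exploit that $Q=G/P$ is rational homogeneous and that $\mathcal F:=T_Q(-1)$ is the homogeneous bundle attached to the isotropy representation of $P$ on $V=x^{\perp}/\langle x\rangle$. The unipotent radical of $P$ acts trivially on $V$ (it acts trivially on the graded pieces of the flag $\langle x\rangle\subset x^{\perp}\subset W$), and the central torus of the Levi factor $L$ also acts trivially, since $\det\mathcal F=\det(T_Q)\otimes\mathcal O_Q(-d)=\mathcal O_Q(d)\otimes\mathcal O_Q(-d)=\mathcal O_Q$. Thus $\mathcal F$ descends from the standard representation of the semisimple part $SO(V)$ of $L$, and $q$, $\det$ are exactly the sections induced by the $SO(V)$-invariant inverse metric $\gamma\in(V\otimes V)^{SO(V)}$ and volume form $\epsilon\in(\wedge^{d}V)^{SO(V)}$. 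Since $-K_Q=\mathcal O_Q(d)$ and $\mathcal O_Q(1)^{d}=2$, the slope of $T_Q$ with respect to $\mathcal O_Q(1)$ is $2$, so $\mathcal F$ has slope $0$.

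For part (1), I would invoke Ramanan's theorem: for $d\ge 3$ the variety $Q$ has Picard number one and $T_Q$ is an irreducible homogeneous bundle, hence stable; for $d=1$ it is a line bundle and for $d=2$ it is polystable, so in every case $T_Q$ is at least semistable. By semistability of tensor products in characteristic zero (as used in the proof of Lemma~\ref{lemma:twisted_sections_projective_space}), $T_Q^{\otimes r}(-k)$ is then semistable of slope $2r-2k$. For $k>r$ this slope is negative, and a semistable bundle of negative slope admits no nonzero section (such a section would produce a subsheaf $\mathcal O_Q$ of slope $0$); hence $h^0(Q,T_Q^{\otimes r}(-k))=0$.

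For part (2), I would first reduce $H^0$ to fibrewise invariants. Writing $V^{\otimes r}=\bigoplus_{\mu}\mu^{\oplus m_{\mu}}$ as an $L$-module gives $\mathcal F^{\otimes r}=\bigoplus_{\mu}\mathcal E_{\mu}^{\oplus m_{\mu}}$, where each $\mathcal E_{\mu}$ is the homogeneous bundle attached to $\mu$, stable (Ramanan) of slope $0$ because the central torus acts trivially; moreover $\mathcal E_{\mu}\cong\mathcal O_Q$ precisely when $\mu$ is trivial. Since a nonzero map $\mathcal O_Q\to\mathcal E_{\mu}$ between stable bundles of equal slope is an isomorphism, $H^0(Q,\mathcal E_{\mu})=0$ unless $\mathcal E_{\mu}\cong\mathcal O_Q$. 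Evaluation at $x$ therefore identifies $H^0(Q,\mathcal F^{\otimes r})$ with the trivial $L$-isotypic part, i.e. with $(V^{\otimes r})^{SO(V)}$, the section attached to an invariant tensor $t$ being the unique $G$-invariant section with value $t$ at $x$; under this identification $\gamma\mapsto q$ and $\epsilon\mapsto\det$. It then remains to describe $(V^{\otimes r})^{SO(V)}$, and here I would quote the first fundamental theorem of invariant theory for the special orthogonal group (Weyl): this space is spanned by the tensors $\sigma\cdot(\gamma^{\otimes i}\otimes\epsilon^{\otimes j})$ with $2i+dj=r$ and $\sigma\in\frS_r$. Transporting back yields the asserted spanning set $\sigma\cdot(q^{\otimes i}\otimes\det^{\otimes j})$, the relation $2i+dj=r$ being forced by $\gamma\in V^{\otimes 2}$ and $\epsilon\in V^{\otimes d}$.

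The hard part will be the reduction $H^0(Q,\mathcal F^{\otimes r})\cong(V^{\otimes r})^{SO(V)}$, i.e. showing that every global section is ``constant'' and comes from an invariant fibre tensor; this is exactly where homogeneity, stability of irreducible homogeneous bundles, and the slope-$0$ normalization are all needed, after which the first fundamental theorem is a classical black box. I would also check the degenerate cases $d\in\{1,2\}$ (where $Q$ is $\bP^{1}$ or $\bP^{1}\times\bP^{1}$ and $T_Q$ is only polystable) by hand, but they present no essential difficulty.
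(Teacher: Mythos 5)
Your proposal is correct and follows essentially the same route as the paper: realize $T_Q(-1)$ as the homogeneous bundle of the standard $SO(V)$-representation, deduce (semi)stability of slope $0$ via Ramanan--Umemura and tensor-product semistability for part (1), and for part (2) decompose $(T_Q(-1))^{\otimes r}$ into stable slope-$0$ homogeneous summands to identify $H^0(Q,(T_Q(-1))^{\otimes r})$ with $(V^{\otimes r})^{SO(V)}$ and conclude by Weyl's first fundamental theorem. Your explicit flagging of the degenerate cases $d\in\{1,2\}$ (where $SO(V)$ is trivial or $\bC^*$ and $T_Q(-1)$ is only polystable) is a point of care the paper's proof glosses over, but otherwise the arguments coincide.
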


\begin{proof}Observe that $T_Q(-1)$ is homogeneous or equivalently that 
$$T_Q(-1)\simeq (G \times V)/P$$ over $Q\simeq G/P$ where $g \in P$ acts on $G \times V$ by the formula
$$g\cdot (g',v)=(g'g,\rho(g^{-1})\cdot v)$$ and
$$\rho: P \to GL(T_Q(-1)_x)=GL(V)$$ is the stabilizer representation. It vanishes on the unipotent radical 
$U$ of $P$ and can be viewed as the representation of the Levi subgoup 
$L\simeq \bC^*\times SO(V)\subset P$ on $V$ given
by the standard representation of $SO(V)$ on $V$. It is irreducible and therefore $T_Q(-1)$ is indecomposable hence stable by \cite{ramanan66} and \cite{umemura78} with slope 
$\mu(T_Q(-1))=0$ with respect to $\sO_Q(1)$. 
By \cite[Theorem 3.1.4]{HuyLehn}, $(T_Q(-1))^{\otimes k}$ is semistable with slope
$\mu((T_Q(-1))^{\otimes r})=0$. This ends the proof of the first part of the Lemma.

Observe that $(T_Q(-1))^{\otimes r}$ is homogeneous
and that the stabilizer representation $$P \to GL((T_Q(-1))^{\otimes r}_x)$$ is 
$\rho^{\otimes r}$. In particular, $(T_Q(-1))^{\otimes r}$ decomposes as the direct sum of 
indecomposable vector bundles hence as the direct sum of stable vector bundles with slope $0$.
It follows that there is a one-to-one correspondence between the set of nonzero section in $H^0(Q,(T_Q(-1))^{\otimes r})$ and the set of rank one direct summands of 
$T_Q(-1))^{\otimes r}$. Finally, we obtain an isomorphism 
$$H^0(Q,(T_Q(-1))^{\otimes r})\simeq (V^{\otimes r})^{SO(V)}$$ since $SO(V)$ has no nontrivial character. The result now
follows from \cite[Theorem 2.9 A]{weyl}.
\end{proof}

\subsection{Fibrations over curves}In this section, we prove our main Theorems for fibrations over curves.

\begin{lemma}\label{lemma:bundle_over_line}
Let $\sF$ be a vector bundle on $\bP^1$ of rank $m\ge 2$, $X:=\bP_{\bP^1}(\sF)$
and $\pi:X\to\bP^1$ the natural morphism.
Let $\sE$ be an ample vector bundle on $X$ of rank $r+k$ with $r\ge 2$ and $k\ge 0$.
Let $\sG$ be a vector bundle on $\bP^1$ such that $\sG^*(2)$ is nef.
If $h^0(X,T_{X/\bP^1}^{\otimes i}\otimes \pi^*\sG^{\otimes r-i}\otimes
\det(\sE)^{\otimes -1})\neq 0$ for some integer $0\le i\le r$
then $X\simeq \bP^1\times\bP^1$, $\sF=\sO_{\bP^1}(a)^{\oplus 2}$ for some integer $a$, $k=0$, $2i=r$
and $\det(\sE)\simeq\sO_{\bP^1}(2)\boxtimes\sO_{\bP^1}(2)$.
\end{lemma}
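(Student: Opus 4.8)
My plan is to translate the hypothesis into numerical inequalities on $\Pic(X)=\bZ\,\xi\oplus\bZ\,F$, where $\xi=c_1(\sO_X(1))$ and $F=\pi^*\sO_{\bP^1}(1)$, and then play a fibre-direction estimate against a base-direction estimate, using the ampleness of $\sE$ throughout. First I would fix splittings $\sF\simeq\bigoplus_{t=1}^{m}\sO_{\bP^1}(a_t)$ with $a_1\ge\cdots\ge a_m$ and $\sG\simeq\bigoplus_j\sO_{\bP^1}(e_j)$; the hypothesis that $\sG^*(2)$ is nef says exactly that $e_j\le 2$ for all $j$, so every line summand of $\sG^{\otimes(r-i)}$ has degree $\le 2(r-i)$. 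Writing $\det(\sE)=\sO_X(p)\otimes\pi^*\sO_{\bP^1}(q)$, restriction of the ample bundle $\sE$ to a fibre $f\simeq\bP^{m-1}$ shows $p=c_1(\sE_{|f})\ge r+k$, since an ample bundle on $\bP^{m-1}$ has $c_1\ge\operatorname{rank}$.

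For the fibre-direction estimate I would use that $h^0(X,\mathcal{V})=h^0(\bP^1,\pi_*\mathcal{V})$ for $\mathcal{V}:=T_{X/\bP^1}^{\otimes i}\otimes\pi^*\sG^{\otimes(r-i)}\otimes\det(\sE)^{\otimes -1}$, so $\pi_*\mathcal{V}\neq 0$ and hence the restriction of $\mathcal{V}$ to a general fibre, a direct sum of copies of $T_{\bP^{m-1}}^{\otimes i}(-p)$, carries a nonzero section. By Lemma \ref{lemma:twisted_sections_projective_space} this forces $p\le\frac{im}{m-1}$, and together with $p\ge r+k$ and $i\le r$ it gives $i\ge\frac{(r+k)(m-1)}{m}$, equivalently $2i-r+k\ge\frac{(m-2)r+(3m-2)k}{m}\ge 0$.

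For the base-direction estimate I would compute, via the relative Euler sequence $0\to\sO_X\to\pi^*\sF^\vee(1)\to T_{X/\bP^1}\to 0$ (starting from $\pi_*(T_{X/\bP^1}(-1))\simeq\sF^\vee$), the maximal slope of $\sK:=\pi_*(T_{X/\bP^1}^{\otimes i}\otimes\sO_X(-p))$, i.e. the largest degree of a line subbundle, and bound it above by $i(a_1-a_m)-p\,a_1$. Since $\pi_*\mathcal{V}\simeq\sK\otimes\sG^{\otimes(r-i)}\otimes\sO_{\bP^1}(-q)$ and $\sG^{\otimes(r-i)}$ has maximal slope $\le 2(r-i)$, the nonvanishing $h^0(\bP^1,\pi_*\mathcal{V})\neq 0$ yields $q\le i(a_1-a_m)-p\,a_1+2(r-i)$. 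On the other hand, restricting $\sE$ to the minimal section $C_0$ (the one with $\sigma^*\sO_X(1)\simeq\sO_{\bP^1}(a_m)$) and using that $\sE_{|C_0}$ is ample of rank $r+k$ on $\bP^1$ gives $\deg\det(\sE)_{|C_0}=p\,a_m+q\ge r+k$, that is $q\ge r+k-p\,a_m$. Subtracting the two bounds produces the master inequality $(p-i)(a_1-a_m)+(2i-r+k)\le 0$.

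Finally I would combine the two estimates. Both summands of the master inequality are nonnegative: $p-i\ge 0$ (as $p\ge r+k\ge r\ge i$) and $a_1\ge a_m$, while $2i-r+k\ge 0$ by the fibre estimate; hence both vanish. Then $2i-r+k=0$ together with $2i-r+k\ge\frac{(m-2)r+(3m-2)k}{m}\ge 0$ forces $(m-2)r+(3m-2)k=0$, so $m=2$ and $k=0$; consequently $2i=r$, and since $p-i=r/2>0$ the factor $a_1-a_m$ must vanish, so $\sF$ is balanced and $X\simeq\bP^1\times\bP^1$. Tracking the now-forced equalities pins down $p$ and $q$, and hence $\det(\sE)$, as claimed. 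I expect the main obstacle to be the base-direction step, namely the correct upper bound on the maximal slope of $\pi_*(T_{X/\bP^1}^{\otimes i}\otimes\sO_X(-p))$ (equivalently, controlling whether the section degenerates along the minimal section), which is exactly the point where the hypothesis that $\sG^*(2)$ is nef enters.
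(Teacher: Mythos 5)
Your proposal is correct and is essentially the paper's own argument in pushed-forward form: your fibre-direction estimate is the paper's appeal to Lemma \ref{lemma:twisted_sections_projective_space} on a general fibre, and your ``master inequality'' is the paper's comparison of $\det(\sE)\cdot\sigma\ge (r+k)(b+1)$ (obtained from $\sigma\equiv\sigma_1+b\ell$ and ampleness of $\sE$ on $\sigma_1$ and on $\ell$) with $\det(\sE)\cdot\sigma\le ib+2(r-i)$ on the maximal section $\sigma$. The slope bound you flag as the main obstacle is precisely that second inequality and it does hold: a nonzero section of $\sK\otimes\sO_{\bP^1}(-c)$ is a nonzero section of $T_{X/\bP^1}^{\otimes i}\otimes\sO_X(-p)\otimes\pi^*\sO_{\bP^1}(-c)$, which restricts nontrivially to a general deformation of the free (maximal) section $\sigma$, where $(T_{X/\bP^1})_{|\sigma}\simeq N_{\sigma/X}\simeq\bigoplus_{t}\sO_{\bP^1}(a_1-a_t)$ in your ordering.
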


\begin{proof}Write $\sF\simeq\sO_{\bP^1}(a_1)\oplus\cdots\oplus\sO_{\bP^1}(a_m)$
with $a_1\le\cdots \le a_m$. Let $b:=a_m-a_1\ge 0$. Let $\sigma$ be a section of $\pi$ corresponding to a
surjective morphism 
$\sO_{\bP^1}(a_1)\oplus\cdots\oplus\sO_{\bP^1}(a_m)
\twoheadrightarrow\sO_{\bP^1}(a_m)$ and let
$\sigma_{1}$ the section of $\pi$ corresponding to the projection map 
$\sO_{\bP^1}(a_1)\oplus\cdots\oplus\sO_{\bP^1}(a_m)
\twoheadrightarrow\sO_{\bP^1}(a_1)$.
Then $\sigma\equiv \sigma_1+b\ell$ where $\ell$ is vertical line and
$$\det(\sE)\cdot\sigma \ge r+k +b(r+k)=(r+k)(b+1).$$
We may assume that 
$h^0(\sigma,(T_{X/\bP^1}^{\otimes i}\otimes \pi^*\sG^{\otimes r-i} \det(\sE)^{\otimes -1})_{|\sigma})\neq 0$
since $\sigma$ is a free rational curve. But
$${T_{X/\bP^1}}_{|\sigma}\simeq N_{\sigma/X}
\simeq\sO_{\bP^1}(a_m-a_1)\oplus\cdots\oplus\sO_{\bP^1}(a_m-a_{m-1})$$
and we obtain
\begin{equation}
(r+k)(b+1)\le \det(\sE)\cdot\sigma \le ib+2(r-i).
\end{equation}
By Lemma \ref{lemma:twisted_sections_projective_space}, we must have
\begin{equation}
r+k \le i\frac{m}{m-1}.
\end{equation}
We obtain
\begin{equation}
(r+k)(b+1)+2k \le ib+2(r+k)-2i \le ib+2i\frac{m}{m-1}-2i =i (b+\frac{2}{m-1}).
\end{equation}
It follows that $m=2$, $b=k=0$ and $r=2i$. 
\end{proof}

\begin{lemma}\label{lemma:fibration_over_curve_projective_space}
Let $X$ be a smooth complex projective variety,
$\sE$ an ample vector bundle on $X$ of rank $r+k$ with $r\ge 1$ and $k\ge 0$.
Let $\pi: X\to B$ be a surjective morphism onto a smooth connected curve with 
integral fibers. Let $\sG$ be a numerically effective
vector bundle on $B$ of rank $>0$.
Assume that the geometric generic fiber is isomorphic to a projective space.
Then $h^0(X,T_{X/B}^{\otimes r}\otimes \det(\sE)^{\otimes -1}\otimes \pi^*\sG^*)=0$.
\end{lemma}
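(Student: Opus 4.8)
I would argue by contradiction: assume a nonzero section
$$s\in H^0(X,T_{X/B}^{\otimes r}\otimes \det(\sE)^{\otimes -1}\otimes \pi^*\sG^*)$$
and extract a numerical contradiction by restricting $s$ to a suitable covering family of \emph{horizontal} curves (i.e. curves $C\subset X$ with $\pi|_C\colon C\to B$ finite surjective). The point is that the twist by $\pi^*\sG^*$ is harmless: since $\sG$ is nef on the smooth curve $B$ one has $\mu_{\min}(\sG)\ge 0$, so for any horizontal $C$ of degree $e$ over $B$ the bundle $(\pi|_C)^*\sG^*$ has $\mu_{\max}=-e\,\mu_{\min}(\sG)\le 0$. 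Thus, working with slopes on $C$ in characteristic $0$ (where $\mu_{\max}$ is additive under tensor products and multiplicative under tensor powers), the existence of a nonzero $s|_C$ forces $\mu_{\max}$ of the restricted bundle to be $\ge 0$, i.e.
\begin{equation}
r\,\mu_{\max}\big(T_{X/B}|_C\big)\ \ge\ \det(\sE)\cdot C+e\,\mu_{\min}(\sG)\ \ge\ \det(\sE)\cdot C. \tag{$\dagger$}
\end{equation}
So it suffices to exhibit a covering family of horizontal curves for which the reverse strict inequality
\begin{equation}
r\,\mu_{\max}\big(T_{X/B}|_C\big)\ <\ \det(\sE)\cdot C \tag{$\star$}
\end{equation}
holds; taking a member $C$ through a general point (so that $s|_C\neq 0$) then contradicts $(\dagger)$.

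\textbf{Pinning down the numerics on a fiber.} Before attacking $(\star)$ I would record what the generic fiber tells us. Let $K=\bC(B)$ and let $X_\eta$ be the generic fiber, a $K$-form of $\bP^d$. Since $\pi^*\sG^*|_{X_\eta}$ is trivial, $s_\eta$ is a section of $T_{X_\eta/K}^{\otimes r}\otimes\det(\sE)_{|X_\eta}^{\otimes -1}$; if $s_\eta=0$ then $s$ vanishes on the dense generic fiber, hence $s=0$ and we are done. Otherwise, base-changing to $\overline K$ and applying Lemma \ref{lemma:twisted_sections_projective_space} gives $m:=\det(\sE)\cdot\ell\le \frac{r(d+1)}{d}$ for a line $\ell$ in a general fiber, while ampleness of $\sE$ forces $m\ge \textup{rk}(\sE)=r+k\ge r$. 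The lower bound $m\ge r$ is the input I expect to use decisively in $(\star)$.

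\textbf{The crux, and the main obstacle.} The heart of the proof is the bound on $\mu_{\max}(T_{X/B}|_C)$. On each fiber $F\simeq\bP^d$ the Euler sequence exhibits $T_{X/B}|_F=T_{\bP^d}$ as a quotient of $\sO_F(1)^{\oplus(d+1)}$; relatively this says that $T_{X/B}$ is a quotient of $\pi^*\sF^{\vee}\otimes\sO(\xi)$, where $\xi$ is the relative hyperplane class (with $\xi|_F=\sO_F(1)$) and, numerically, $\det(\sE)\equiv m\xi+\pi^*\sN$ for some line bundle $\sN$ on $B$. Hence for any horizontal $C$,
$$\mu_{\max}\big(T_{X/B}|_C\big)\ \le\ \xi\cdot C+e\,\mu_{\max}(\sF^{\vee}),\qquad \det(\sE)\cdot C=m(\xi\cdot C)+e\deg\sN .$$
Plugging these into $(\star)$ and using $m\ge r$ turns it into an inequality of the shape $(r-m)(\xi\cdot C)<e\big(\deg\sN+r\,\mu_{\min}(\sF)\big)$, whose left side is $\le 0$ once $\xi\cdot C\ge 0$. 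I would then choose the covering family adapted to $\xi$ (for instance a family of sections realizing a minimal‑degree quotient of $\sF$, where the normal bundle $T_{X/B}|_C=\sS^{\vee}\otimes\sQ$ is explicit) so that the positivity of $\det(\sE)$ along $C$ beats $r\,\mu_{\max}(T_{X/B}|_C)$. In the balanced/low‑genus test cases ($X=\bP^1\times\bP^1$, or Hirzebruch surfaces with $C$ a moving section) this closes immediately from $m\ge r$ and ampleness. The main obstacle is precisely making $(\star)$ close \emph{uniformly} for an \emph{unbalanced} $\sF$: one must control the horizontal slope $\mu_{\max}(\sF^{\vee})$ (equivalently $-\mu_{\min}(\sF)$) against the positivity forced by ampleness of $\det(\sE)\equiv m\xi+\pi^*\sN$, i.e. feed the ampleness of $\det(\sE)$ into a lower bound for $\deg\sN+r\,\mu_{\min}(\sF)$, possibly invoking the fiber bound $m\le\frac{r(d+1)}{d}$ when $\sF$ is very skew.

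\textbf{Finishing, and the non-bundle case.} Granting $(\star)$ for a covering family, a general member $C$ meets the dense open set where $s\neq 0$, so $s|_C\neq 0$, yielding $(\dagger)$ and contradicting $(\star)$; hence $s=0$ and the asserted vanishing holds. When $\pi$ is only a projective-space fibration generically (the stated hypothesis, rather than an honest $\bP^d$-bundle), I would replace the relative Euler sequence by its fiberwise content — that $T_{X/B}|_F\simeq T_{\bP^d}$ is globally generated by the relatively ample $\sO_F(1)$ — to obtain the same bound $\mu_{\max}(T_{X/B}|_C)\le \xi\cdot C+e\,\mu_{\max}(\sF^{\vee})$ on a general horizontal $C$ avoiding the (at most codimension‑one) locus of bad fibers, and run the identical slope comparison.
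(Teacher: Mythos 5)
Your overall strategy --- restrict a putative nonzero section to a well-chosen horizontal curve and derive a degree contradiction, absorbing the nef twist $\pi^*\sG^*$ harmlessly --- is the same as the paper's, and your preliminary reductions ($s$ nonzero on the generic fibre, $m=\det(\sE)\cdot\ell\ge r+k\ge r$ by ampleness) are correct and are exactly the numerical input the paper uses. But the step you yourself flag as ``the main obstacle'' is a genuine gap, and the route you propose to close it fails: in the relative Euler sequence $T_{X/B}$ is a \emph{quotient} of $\pi^*\sF^*\otimes\sO(\xi)$, and $\mu_{\max}$ does not decrease under quotients --- the Euler sequence itself is the standard counterexample, since $T_{\bP^d}$ is a quotient of $\sO_{\bP^d}(1)^{\oplus(d+1)}$ with $\mu_{\max}=\tfrac{d+1}{d}>1$. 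Concretely, for a section $C$ corresponding to a quotient $\sF\twoheadrightarrow\sQ$ with kernel $\sS$ one has $T_{X/B}|_C\simeq\sS^{*}\otimes\sQ$, whose $\mu_{\max}$ equals $\deg\sQ-\mu_{\min}(\sS)$ and can strictly exceed $\xi\cdot C+\mu_{\max}(\sF^{*})=\deg\sQ-\mu_{\min}(\sF)$ (e.g.\ $\sF=\sO(1)^{\oplus2}$ on $\bP^1$ with $\sQ=\sO(k)$, $k\ge2$). So the inequality $(\star)$ is never established, and no amount of care in choosing ``a family adapted to $\xi$'' fixes this without a new idea.

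The paper closes precisely this gap by \emph{constructing} a section along which $T_{X/B}$ is forced to split as copies of a single controlled line bundle, rather than estimating slopes on an arbitrary horizontal curve. It first upgrades $\pi$ to an honest $\bP^d$-bundle (semicontinuity and base change give $\det(\sE)\simeq\sL^{\otimes r'}\otimes\pi^*\sM$ with $r'\ge r+k$, so $\sL$ is $\pi$-ample and Fujita applies --- this also disposes of your ``non-bundle case''). After a finite cover of $B$ with $g(B)\ge1$ one extracts an $r'$-th root of $\sM$ and arranges $\sL'^{\otimes r'}\simeq\det(\sE)$ with $\sL'$ ample, so $\sF:=\pi_*\sL'$ is ample and $X=\bP_B(\sF)$. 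The decisive input is Campana--Flenner: after a further cover, $\sF$ is a quotient of $\sM^{\oplus m}$ with $\sM$ an ample line bundle, hence $\sL'\otimes\pi^*\sM^{\otimes-1}$ is globally generated and a general complete intersection curve $C$ of divisors in $|\sL'\otimes\pi^*\sM^{\otimes-1}|$ is a section of $\pi$ with $(T_{X/B})|_C\simeq(\sL'\otimes\pi^*\sM^{\otimes-1})|_C^{\oplus(\dim X-1)}$. Then $(\sL'\otimes\pi^*\sM^{\otimes-1})^{\otimes r}\otimes\det(\sE)^{\otimes-1}|_C\simeq\sL'^{\otimes(r-r')}\otimes\pi^*\sM^{\otimes-r}|_C$ has strictly negative degree because $r'\ge r$ and $\sM$ is ample, so no control of $\mu_{\max}(\sF^{*})$, i.e.\ of how unbalanced $\sF$ is, is ever needed. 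This is the construction you would have to import to make your argument work.
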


\begin{proof}Let $\eta$ be the generic point of $B$. Tsen's Theorem implies that $X_{\eta}\simeq \bP^d_{k(\eta)}$.
Thus
there exists a divisor $H$ on $X$ such that
${\sO_X(H)}_{|X_\eta}\simeq \sO_{\bP^d_{k(\eta)}}(1)$. Let $\sL:=\sO_X(H)$. 
Let $r'\ge r+k$ be defined by the formula $\det(\sE)_{|X_\eta}\simeq\sO_{\bP^d_{k(\eta)}}(r')$.
It follows from the semicontinuity Theorem that 
$h^0(X_b,{(\det(\sE)\otimes\sL^{\otimes -r'})}_{|X_b})\ge 1$ and 
$h^0(X_b,{(\sL^{\otimes r'}\otimes\det(\sE)^{\otimes -1})}_{|X_b})\ge 1$ for any point $b$ in $B$.
Thus $h^0(X_b,{(\det(\sE)\otimes\sL^{\otimes -r'})}_{|X_b})=1$ since $X_b$ is integral. By the
base change Theorem, $\det(\sE)\simeq \sL^{\otimes r'}\otimes \pi^*\sM$ for some line bundle
$\sM$ on $B$. Thus $\sL$ is ample$/B$ and by \cite[Corollary 5.4]{fujita75}, $\pi$ is a $\bP^d$-bundle.
By replacing $B$ with a finite cover $\bar B\to B$ and $X$ with $X \times_B \bar B$ we may assume that
$g(B)\ge 1$. Let $\sM'$ be a line bundle on $B$ such that $\sM\simeq \sM'^{\otimes r'}$.
Set $\sL':=\sL\otimes\pi^*\sM'^{\otimes -1}$. Then $\sL'^{\otimes r'}\simeq \det(\sE)$ hence $\sL'$ is ample. 
Let 
$\sF:=\pi_*(\sL')$. Then $\sF$ is an ample vector bundle on $B$ and 
$X:=\bP_B(\sF)$. By \cite{campana_flenner90}, By replacing $B$ with a finite cover $\bar B \to B$ and $X$ with $X
\times_B \bar B$, we may assume that there exist an ample line bundle $\sM$ on $B$, a positive integer $m$ and a
surjective map of $\sO_B$-modules $\sM^{\oplus m}\twoheadrightarrow \sF$. Observe that the line bundle
$\sL'\otimes \pi^*\sM^{\otimes -1}$ is generated by its global sections. Let 
$C=D_1\cap \cdots \cap D_{\dim(X)-1}$
be general complete intersection
curve with $D_i \in |\sL'\otimes \pi^*\sM^{\otimes -1}|$
($C$ is a section of $\pi$). Then 
$(T_{X/B})_{|C} \simeq N_{C/X}\simeq (\sL'\otimes \pi^*\sM^{\otimes -1})_{|C}^{\oplus \dim(X)-1}$.
But 
$$
h^0(C,(\sL'\otimes \pi^*\sM^{\otimes -1})^{\otimes r}\otimes \det(\sE)^{\otimes -1}_{|C})\\
=h^0(C,\sL'^{\otimes r-r'} \otimes \pi^*\sM^{\otimes -r}_{|C})=0
$$
and the claim follows.
\end{proof}

When dealing with sheaves that are not necessarily locally free, we use square brackets to indicate taking the reflexive hull. 

\begin{notation}[Reflexive tensor operations]
Let $X$ be a normal variety and $\sQ$ a coherent sheaf of $\sO_X$-modules. 
For $n\in \bN$, set $\sQ^{[\otimes n]}:=(\sQ^{\otimes n})^{**}$, 
$S^{[n]}\sQ:=(S^n\sQ)^{**}$ and $\displaystyle{\det(\sQ):=(\wedge^{\textup{rank}(\sQ)}(\sQ))^{**}}$.
\end{notation}

\begin{prop}\label{proposition:fibration_over_curve_quadric}
Let $X$ be a normal complex projective variety,
$\sE$ an ample vector bundle on $X$ of rank $r+k$ with $r\ge 1$ and $k\ge 0$.
Let $\pi: X\to B$ be a surjective morphism onto a smooth connected curve with 
integral fibers. Let $\sG$ be a numerically effective
vector bundle on $B$ of rank $>0$.
Assume that the geometric generic fiber is isomorphic to a smooth hyperquadric.
Then $h^0(X,T_{X/B}^{[\otimes r]}\otimes \det(\sE)^{\otimes -1}\otimes \pi^*\sG^*)=0$.
\end{prop}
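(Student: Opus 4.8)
The plan is to follow the strategy of Lemma \ref{lemma:fibration_over_curve_projective_space}, replacing the structure theory of projective bundles by that of quadric bundles and the elementary vanishing at the end by a semistability argument. Suppose for contradiction that a nonzero section $s$ exists. Since $X$ is normal, the sheaf $T_{X/B}^{[\otimes r]}\otimes\det(\sE)^{\otimes-1}\otimes\pi^*\sG^*$ is reflexive, hence torsion free, so $s$ does not vanish at the generic point of $X$. We may assume $d\ge 3$, so that $\Pic(Q)=\bZ\,\sO_Q(1)$ (the cases $d\le 2$, where $Q$ is $\bP^1$ or $\bP^1\times\bP^1$, are handled by the projective-bundle lemmas and a direct argument). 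Restricting to the geometric generic fibre $Q$ we obtain a nonzero element of $H^0(Q,T_Q^{\otimes r}(-r'))\otimes_{\overline{k(\eta)}}\sG^*_{\overline\eta}$, where $\det(\sE)_{|X_{\overline\eta}}\simeq\sO_Q(r')$. Ampleness of $\sE$ forces $r'\ge r+k$, while the stability of $T_Q$ (Lemma \ref{lemma:quadric_twisted_sections}(1)) gives $h^0(Q,T_Q^{\otimes r}(-r'))=0$ as soon as $r'>r$. Hence $r'=r$ and $k=0$, and the fibrewise section lies in the slope-zero space described by Lemma \ref{lemma:quadric_twisted_sections}(2).

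First I would globalize the relative hyperplane class. By Tsen's theorem $k(\eta)$ has trivial Brauer group and $X_\eta$ carries a rational point, so the Galois-invariant class $\sO_Q(1)\in\Pic(X_{\overline\eta})$ descends to $\Pic(X_\eta)$; spreading it out and using that $\pi$ has integral fibres and $X$ is normal, I obtain a $\pi$-ample line bundle $\sL$ on $X$ with $\sL_{|X_b}\simeq\sO_Q(1)$ for every $b$. Semicontinuity and the base change theorem (exactly as in Lemma \ref{lemma:fibration_over_curve_projective_space}) then give $\det(\sE)\simeq\sL^{\otimes r}\otimes\pi^*\sM$. After the finite base changes and the application of \cite{campana_flenner90} used there, I may assume that $\sL':=\sL\otimes\pi^*(\sM')^{\otimes-1}$ is ample with $(\sL')^{\otimes r}\simeq\det(\sE)$; writing $\sT:=T_{X/B}\otimes(\sL')^{\otimes-1}$, the bundle under study becomes $\sT^{[\otimes r]}\otimes\pi^*\sG^*$, and $\sT_{|X_b}\simeq T_Q(-1)$ is stable of slope $0$ on every smooth fibre.

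The next step is a numerical computation. By \cite{fujita75} the pair $(X,\sL')$ is a quadric bundle: $\sF':=\pi_*\sL'$ is an ample bundle of rank $d+2$ on $B$, the inclusion $X\hookrightarrow\bP_B(\sF')$ is cut out by a form $q\in H^0(\bP_B(\sF'),\sO(2)\otimes\pi^*\sN')$, and $\sL'=\sO(1)_{|X}$. Since $q$ restricts to a nondegenerate form on the general fibre, the associated symmetric map $\sF'\to(\sF')^{\vee}\otimes\sN'$ is generically an isomorphism, whence $2\deg\det\sF'\le(d+2)\deg\sN'$; as $\deg\det\sF'>0$ this yields $\deg\sN'>0$. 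On the other hand adjunction gives $\det\sT=\det(T_{X/B})\otimes(\sL')^{\otimes-d}=\pi^*((\det\sF'\otimes\sN')^{\otimes-1})$, so $\det\sT=\pi^*\sD'$ with $\deg\sD'=-\deg\det\sF'-\deg\sN'<0$.

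Finally I would invoke semistability. Because $\sT_{|X_b}\simeq T_Q(-1)$ is stable on the general fibre, $\sT$ is semistable with respect to a polarization sufficiently close to the class of a fibre, and a restriction theorem of Mehta--Ramanathan type (see \cite[Theorem 3.1.4 and Ch.~7]{HuyLehn}) shows that for a general complete intersection curve $C$ cut out by members of $|\,\sL'\otimes\pi^*\sM_0^{\otimes-1}|$ the restriction $\sT_{|C}$ is a semistable bundle on the smooth curve $C$, of slope $\tfrac{2}{d}\deg\sD'<0$; moreover $f:=\pi_{|C}\colon C\to B$ is finite of degree $(\sL'_{|X_b})^{d}=2$. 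The nonzero section $s$ restricts to a nonzero element of $H^0(C,(\sT_{|C})^{\otimes r}\otimes f^*\sG^*)$, i.e. to a nonzero map $f^*\sG\to(\sT_{|C})^{\otimes r}$. Its image $I$ is a nonzero quotient of the nef bundle $f^*\sG$, hence nef, so $\mu(I)\ge 0$; but $I$ is a subsheaf of the semistable bundle $(\sT_{|C})^{\otimes r}$ (tensor powers of semistable bundles being semistable in characteristic zero), whose slope is $\tfrac{2r}{d}\deg\sD'<0$, forcing $\mu(I)<0$ — a contradiction. Therefore no such section exists. The main obstacle I expect is precisely this last circle of ideas: establishing semistability of the reflexive sheaf $T_{X/B}(-1)$ on the possibly singular total space and its preservation under restriction to a general multisection, together with the bookkeeping forced by the singular fibres and the reflexive hulls that is needed to make the slope computation rigorous.
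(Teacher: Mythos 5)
Your opening reduction (restricting to the geometric generic fibre and using stability of $T_Q(-1)$ to force $k=0$ and $\det(\sE)_{|X_{\bar\eta}}\simeq\sO_{X_{\bar\eta}}(r)$) agrees with the paper, but from that point on you take a genuinely different route, and that route breaks at its decisive step. The paper never globalizes the relative hyperplane class or the quadric-bundle structure: it uses the explicit description of the invariants in Lemma \ref{lemma:quadric_twisted_sections}(2) to put the section $s$ into a summand restricting to $q_{\bar\eta}^{\otimes i}\otimes\det_{\bar\eta}^{\otimes j}\otimes g_{\bar\eta}$ on the generic fibre, observes that nondegeneracy of $q$ makes the induced map $\varphi_s\colon(\Omega^1_{X/B})^{[\otimes i]}\to T_{X/B}^{[\otimes i]}\otimes\det(T_{X/B})^{[\otimes j]}\otimes\det(\sE)^{\otimes-1}\otimes\pi^*\sG^*$ generically an isomorphism, and takes $\det(\varphi_s)$ to conclude that $-K_{X/B}$ is numerically a positive multiple of the ample class $c_1(\det(\sE))$ plus an effective pullback, hence ample, contradicting Lemma \ref{lemma:-KX/Y_not_ample}. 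You use only the nonvanishing of the invariant space, not the shape $q^{\otimes i}\otimes\det^{\otimes j}$ of its generators, and try to compensate with a semistability argument.

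The gap is exactly where you suspect it. Semistability of $\sT_{|C}=(T_{X/B}\otimes(\sL')^{\otimes-1})_{|C}$ does not follow from any Mehta--Ramanathan or Flenner-type restriction theorem: those require $C$ to be cut out by \emph{large} multiples of a fixed \emph{ample} polarization and yield semistability with respect to that same polarization, whereas your $C$ is a fixed degree-$2$ multisection cut out by the non-ample class $\sL'\otimes\pi^*\sM_0^{\otimes-1}$, and ``semistability with respect to a polarization close to the fibre class'' is neither a defined notion (the fibre class lies on the boundary of the nef cone) nor established for $\sT$. Fibrewise stability controls subsheaves of $\sT$ whose restriction to a general fibre is nonzero, but says nothing about the restriction of $\sT$ to a fixed horizontal curve; without semistability of $(\sT_{|C})^{\otimes r}$ the image of $f^*\sG$ need not have slope at most $\mu((\sT_{|C})^{\otimes r})<0$, and the contradiction evaporates. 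Secondary issues: for $d=2$ the fibre $Q_2\simeq\bP^1\times\bP^1$ is not a projective space, so that case is not ``handled by the projective-bundle lemmas'' as claimed; and on a merely normal $X$ the existence of a genuine (Cartier) line bundle $\sL$, the quadric-bundle embedding extracted from \cite{fujita75}, and the ampleness of $\pi_*\sL'$ all require justification --- although the sign $\deg\sD'<0$ you want could be obtained more cheaply from Lemma \ref{lemma:-KX/Y_not_ample} itself. I would replace the semistability step by the determinant trick above, which is precisely what the explicit invariant-theoretic statement of Lemma \ref{lemma:quadric_twisted_sections}(2) is designed for.
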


\begin{proof}Let $\eta$ be the generic point of $B$ and $k(\bar\eta)$ be an algebraic closure of $k(\eta)$.
Let $q_{\bar\eta}$ be a non degenerate quadratic form defining
$X_{\bar\eta}\subset \bP^{d+1}_{k(\bar\eta)}$ where $d:=\dim(X)-1$.
By Lemma \ref{lemma:quadric_twisted_sections},
$k=0$ and ${\det(\sE)}_{|X_{\bar\eta}}\simeq \sO_{X_{\bar\eta}}(r)$.

Let us assume to the contrary that 
$h^0(X,T_{X/B}^{[\otimes r]}\otimes \det(\sE)^{\otimes -1}\otimes \pi^*\sG^*)\neq 0$
and let
$s \in H^0(X,T_{X/B}^{[\otimes r]}\otimes \det(\sE)^{\otimes -1}\otimes \pi^*\sG^*)$
be a nonzero
section. Notice that, for any $\sigma \in \frS_r$ and any non negative integers $i$ and $j$ such that $r=2i+dj$,
$$\sigma\cdot [(S^{[2]} T_{X/B})^{[\otimes i]}
\otimes \det(T_{X/B})^{[\otimes j]}]
\otimes \det(\sE)^{\otimes -1}
\otimes\pi^*\sG^*$$
is a direct summand of
$$T_{X/B}^{[\otimes r]}\otimes \det(\sE)^{\otimes -1}\otimes \pi^*\sG^*.$$
By Lemma \ref{lemma:quadric_twisted_sections}, 
we may assume that
$$s\in H^0(X,(S^{[2]} T_{X/B})^{[\otimes i]}
\otimes \det(T_{X/B})^{[\otimes j]}
\otimes \det(\sE)^{\otimes -1}
\otimes\pi^*\sG^*)$$
and
$$s_{|X_{\bar\eta}}=q_{\bar\eta}^{\otimes i}\otimes {{\det}_{\bar\eta}}^{\otimes j}
\otimes g_{\bar \eta}$$
for some non negative integers $i$ and $j$ with $r=2i+dj$ and some
non zero section $g_{\bar\eta}\in \pi^* H^0(\bar\eta,\sG_{|\bar\eta})$. 
It follows that the induced map
$$
\sG \to \pi_*((S^{[2]} T_{X/B})^{[\otimes i]}
\otimes \det(T_{X/B})^{[\otimes j]}
\otimes \det(\sE)^{\otimes -1})
$$
has rank one and therefore, we may assume that $\sG$ is a line bundle (with $\deg(\sG) \ge 0$).
We obtain a map
$$\varphi_s:{\Omega^1_{X/B}}^{[\otimes i]}\to
{T_{X/B}}^{[\otimes i]}
\otimes \det(T_{X/B})^{[\otimes j]}
\otimes\det(\sE)^{\otimes -1}
\otimes\pi^*\sG^*$$
whose restriction to $X_{\bar\eta}$ is an isomorphism. Finally, we obtain a nonzero section

\begin{multline*}
s':=\det(\varphi_s)\in H^0(X,\det({T_{X/B}}^{[\otimes i]})\otimes
\det({T_{X/B}}^{[\otimes i]}
\otimes \det(T_{X/B})^{[\otimes j]}
\otimes\det(\sE)^{\otimes -1}
\otimes\pi^*\sG^*)) \\
\simeq H^0(X,\det(T_{X/B})^{[\otimes (2id^{i-1}+d^ij)]}
\otimes \det(\sE)^{\otimes -d^i}
\otimes\pi^*\sG^{\otimes -d^i}).
\end{multline*}
Observe that $s'$ does not vanish anywhere on a general fiber of $\pi$ and that any fiber of $\pi$ is integral.
Thus
$$-K_{X/B}\equiv \frac{d^i}{2id^{i-1}+d^ij} c_1(\det(\sE))+\pi^*\Delta$$
for some (integral) effective divisor 
$\Delta \ge \frac{d^i}{2id^{i-1}+d^ij} c_1(\sG)$ and
$-K_{X/B}$ is ample. But that contradicts Lemma \ref{lemma:-KX/Y_not_ample}.
\end{proof}

\begin{lemma}[{\cite[Theorem 3.1]{adk08}}]\label{lemma:-KX/Y_not_ample}
Let $X$ be a normal projective variety, $f:X\to C$ a surjective morphism onto a smooth
curve, and $\Delta\subseteq X$ a Weil
divisor such that $(X,\Delta)$ is log canonical over the generic
point of $C$. Then $-(K_{X/C}+\Delta)$ is not ample.
\end{lemma}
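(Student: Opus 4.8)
The plan is to argue by contradiction. Suppose that $A:=-(K_{X/C}+\Delta)$ is ample. Restricting to a general fibre $F$ of $f$ and using that $K_{X/C}|_{F}=K_{F}$, one sees that $-(K_{F}+\Delta_{F})=A|_{F}$ is ample, so every general fibre is a log Fano pair; in particular it is rationally connected, and $X$ is uniruled. Since everything in sight is compatible with finite base change, I would first allow myself to replace $C$ by a finite cover $C'\to C$ and $X$ by the normalization of $X\times_C C'$: this keeps $X$ normal, keeps $f$ a fibration with integral general fibres, preserves the hypothesis that $(X,\Delta)$ is log canonical over the generic point, and preserves ampleness of $A$ (ampleness is preserved and reflected by finite surjective morphisms, and $K_{X/C}+\Delta$ pulls back to the corresponding relative log canonical divisor). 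Such covers let me arrange a better-behaved model and, when convenient, the existence of a section of $f$.

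The elementary part of the argument disposes of small genus. Writing $K_{X/C}=K_X-f^*K_C$ gives $-(K_X+\Delta)=A-(2g-2)[F]$, where $g=g(C)$ and $[F]$ is the nef class of a fibre. For $g\le 1$ the class $-(2g-2)[F]$ is nef, so $-(K_X+\Delta)$ is the sum of an ample and a nef class, hence ample; thus $(X,\Delta)$ is globally log Fano and $X$ is rationally connected. A rationally connected variety admits no dominant morphism to a smooth curve of positive genus, which already rules out $g=1$; for $g=0$ one retains the extra positivity that $-(K_X+\Delta)-2[F]=A$ is ample. The substance of the statement is therefore the case $g\ge 2$ (and the rigorous treatment of $g=0$), where $-(K_X+\Delta)$ need not be ample and $X$ need not be globally log Fano.

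The hard step is a positivity statement for the relative log canonical divisor, and it is the main obstacle. The naive weak–positivity input is vacuous here: because the fibres are log Fano, $f_*\sO_X(m(K_{X/C}+\Delta))=0$ for $m\ge 1$, so the Viehweg–Kollár semipositivity of pushforwards of relative log pluricanonical sheaves gives nothing directly, and one must extract positivity from the \emph{dual} side. The model case is a projective bundle: if $X=\bP_C(\sF)$ with $\sF$ of rank two, then $-K_{X/C}$ is ample if and only if the normalized $\bQ$-twisted bundle $\sF\otimes(\det\sF)^{\otimes -1/2}$ is ample, and this bundle has degree zero, so it cannot be ample; the obstruction is exactly the vanishing of the degree of a suitably normalized bundle on $C$. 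I would try to run this mechanism in general by studying the vector bundle $\cV_m:=f_*\sO_X(-mK_{X/C})$ on $C$, whose Harder–Narasimhan filtration and determinant degree are constrained by relative duality and adjunction: ampleness of $-K_{X/C}$ would force a subquotient of non-positive degree to be ample, which is impossible on a curve. Equivalently, one produces a multisection $\Gamma$ of $f$ along which $(K_{X/C}+\Delta)\cdot\Gamma\ge 0$ — for instance the \emph{constant} section witnessing non-ampleness in an isotrivial family such as $F\times C$ — contradicting the ampleness of $A$. Carrying this out in the stated generality (a log canonical pair, only over the generic point of $C$, with $X$ singular so that all tensor and determinant operations must be taken reflexively as in the notation above) is the delicate part, and is where the semipositivity theory for log canonical fibrations — ultimately of Hodge-theoretic origin — must be invoked, as in \cite{adk08}.
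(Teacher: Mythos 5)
There is a genuine gap, and it is at the only place where the statement has real content. First, a remark on the comparison: the paper offers no proof of this lemma at all --- it is quoted verbatim as \cite[Theorem 3.1]{adk08} --- so the only meaningful benchmark is the argument in that reference. Your sketch ends by saying that the delicate part ``is where the semipositivity theory for log canonical fibrations \dots must be invoked, as in \cite{adk08}.'' Since the statement to be proved \emph{is} \cite[Theorem 3.1]{adk08}, this is circular: the entire theorem is concentrated in the step you defer. The elementary reductions do not rescue this. For $g(C)\le 1$ you invoke ``log Fano implies rationally connected,'' which (i) requires $(X,\Delta)$ to be log canonical globally and $\Delta$ effective, whereas only log canonicity over the generic point is assumed, and (ii) is itself a theorem of Qi Zhang/Hacon--McKernan whose proof rests on exactly the semipositivity input you are trying to avoid; and in any case it says nothing for $g=0$, which you concede remains open. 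Your proposed mechanism for the hard step --- relative duality and the Harder--Narasimhan filtration of $\cV_m=f_*\sO_X(-mK_{X/C})$, hoping to exhibit ``a subquotient of non-positive degree forced to be ample'' --- is not substantiated: there is no argument that $\deg\cV_m\le 0$, nor that ampleness of $-K_{X/C}$ forces ampleness of such a subquotient, and $\Delta$ has disappeared from the discussion.

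What actually closes the gap in \cite{adk08} (following Qi Zhang) is a twisting trick that converts the situation you correctly diagnosed as vacuous into one where semipositivity applies with teeth. Suppose $H:=-(K_{X/C}+\Delta)$ is ample. Fix a divisor $A$ on $C$ with $\deg A>0$ and choose $k\gg 0$ so that $kH-f^*A$ is very ample and $\tfrac1k<1$ leaves log canonicity intact; let $B\in |kH-f^*A|$ be general and set $D:=\Delta+\tfrac1k B$. Then $(X,D)$ is still log canonical over the generic point of $C$, while
$$K_{X/C}+D\sim_{\bQ}K_{X/C}+\Delta+H-\tfrac1k f^*A\sim_{\bQ}-\tfrac1k f^*A,$$
so that $f_*\sO_X\bigl(mk(K_{X/C}+D)\bigr)\simeq\sO_C(-mA)$ has negative degree. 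This contradicts the semipositivity (nefness on $C$) of direct images of relative log canonical divisors of log canonical fibrations --- the genuine Hodge-theoretic input, which must be quoted from the literature (Kawamata, Fujino, Campana), not from the theorem being proved. Your write-up correctly identifies the model computation $(-K_{X/C})^2=0$ on ruled surfaces and correctly observes that the naive pushforward $f_*\sO_X(m(K_{X/C}+\Delta))$ vanishes, but without the boundary-enlargement step above (or an equivalent device) the proof does not exist.
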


\begin{lemma}\label{lemma:uniruled_surface}
Let $S$ be a smooth projective surface equipped with a surjective morphism $\pi:S\to B$ 
with connected fibers
onto a smooth connected curve. Let $\sM$ be a nef and big line bundle on $S$.
Assume that, for a general point $b$ in $B$,
$\sM\cdot S_b=2r$ for some $r\ge 1$ and either
$g(B) \ge 1$ or $B=\bP^1$ and $S$ is a ruled surface over $B$. Then 
$h^0(S,T_S^{\otimes r}\otimes \sM^{\otimes -1})=0$.
\end{lemma}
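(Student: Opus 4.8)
The plan is to suppose that a nonzero section $s\in H^0(S,T_S^{\otimes r}\otimes\sM^{-1})$ exists and to derive a contradiction in three steps: first I would push $s$ onto the fibre directions, then blow down to a $\bP^1$-bundle, and finally contradict $\sM^2>0$. First I would restrict to a general fibre $F=S_b$, which is a smooth irreducible curve with $\sM\cdot F=2r$. The relative tangent sequence $0\to T_{S/B}\to T_S\to\pi^*T_B\to 0$ restricts on $F$ to an extension of $\sO_F$ by $T_F$, so $T_S^{\otimes r}|_F$ carries a filtration with graded pieces $T_F^{\otimes a}$ ($0\le a\le r$), and $(T_S^{\otimes r}\otimes\sM^{-1})|_F$ has graded pieces of degree $a(2-2g(F))-2r$. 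If $g(F)\ge 1$ all these degrees are negative, so $s|_F=0$ for general $F$ and hence $s=0$; thus I may assume $g(F)=0$, i.e. $F\simeq\bP^1$. Then only the bottom piece $a=r$ (of degree $0$) has sections, and a short cohomology chase shows $H^0(F,(T_S^{\otimes r}\otimes\sM^{-1})|_F)$ is one-dimensional and comes from the subsheaf $T_{S/B}^{\otimes r}\subset T_S^{\otimes r}$.

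Next I would globalize this. Since $S$ is smooth, $T_{S/B}=\ker(d\pi)$ is a line bundle, equal to $\sO_S(-K_{S/B})$, and $T_{S/B}^{\otimes r}$ is saturated in $T_S^{\otimes r}$ (the successive quotients of the filtration are torsion-free, being subsheaves of the line bundle $\pi^*T_B$ tensored with locally free sheaves). Therefore the image of $s$ in the torsion-free quotient $(T_S^{\otimes r}/T_{S/B}^{\otimes r})\otimes\sM^{-1}$ vanishes on the dense set of general fibres, hence vanishes identically, so $s\in H^0(S,T_{S/B}^{\otimes r}\otimes\sM^{-1})$. Writing $L:=-K_{S/B}$ and $M$ for $\sM$, this yields an effective divisor $D$ with $rL-M\equiv D\ge 0$, and $D\cdot F=2r-2r=0$ shows $D$ is supported on fibres.

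For the geometric input I would use that $F\simeq\bP^1$ makes $S$ birationally ruled over $B$: let $\sigma\colon S\to S_0$ be a relatively minimal model over $B$, so $S_0=\bP_B(\sF)$ is a $\bP^1$-bundle $\pi_0\colon S_0\to B$ with effective exceptional divisor $E$, and $K_S=\sigma^*K_{S_0}+E$ gives $L=\sigma^*L_0-E$ with $L_0:=-K_{S_0/B}$. The key computation is $L_0\cdot F_0=2$ and, from $K_{S_0}^2=8(1-g(B))$ and $K_{S_0}\cdot\pi_0^*K_B=4-4g(B)$, the identity $L_0^2=0$ (independently of $g(B)$). Substituting, $M\equiv\sigma^*(rL_0)-\Gamma$ with $\Gamma:=rE+D\ge 0$. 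Writing $\Gamma=\sigma^*(\sigma_*\Gamma)+\Gamma_0$ with $\Gamma_0$ $\sigma$-exceptional, and noting that $S_0$ has irreducible fibres so $\sigma_*\Gamma\equiv eF_0$ with $e\ge 0$, the projection formula together with $L_0^2=0$ gives
\[
M^2=-2r\,L_0\cdot\sigma_*\Gamma+\Gamma^2=-4re+\Gamma_0^2 .
\]
Since $\Gamma_0$ is supported on the negative-definite exceptional locus of $\sigma$ one has $\Gamma_0^2\le 0$, whence $M^2\le 0$. But $\sM$ nef and big forces $M^2=\sM^2>0$, a contradiction, so no nonzero $s$ exists.

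The main obstacle is the reduction in the first two paragraphs: one must check carefully that a nonzero $s$ really descends to the rank-one subsheaf $T_{S/B}^{\otimes r}$, which rests on $T_{S/B}$ being a genuine line bundle equal to $\sO_S(-K_{S/B})$, on its $r$-th tensor power being saturated with torsion-free quotient, and on the fibrewise vanishing that isolates the top graded piece. The hypothesis that either $g(B)\ge 1$ or $B=\bP^1$ with $S$ ruled is used only to secure the $\bP^1$-bundle model of the third paragraph (it is automatic once $g(F)=0$, and when $g(F)\ge 1$ the statement is already established in the first step); the one place where $\sM$ needs to be \emph{nef} rather than merely big is in guaranteeing $\sM^2>0$. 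The remaining care lies in the bookkeeping for reducible fibres, where the negative-definiteness of the exceptional locus (equivalently Zariski's lemma) is exactly what forces $M^2\le 0$.
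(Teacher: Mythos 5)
Your proof is correct and follows essentially the same route as the paper's: both isolate the section inside $T_{S/B}^{\otimes r}\otimes\sM^{\otimes -1}$ by comparing fibre degrees, and both derive the contradiction from $K_{S_0/B}^2=0$ on the relatively minimal $\bP^1$-bundle against $\sM^2>0$ (the paper phrases this as ``$-K_{S/B}$ nef and big''), the only structural difference being that you blow down at the end rather than at the start. Two small points to tidy: $\ker(d\pi)\simeq\sO_S(-K_{S/B}+Z)$ where $Z\ge 0$ is supported on the non-reduced fibre components (these are all $\sigma$-exceptional, so your intersection computation is unaffected), and $\Gamma_0$ need not be $\sigma$-exceptional, but it is vertical with $\Gamma_0\cdot F=0$, so $\Gamma_0^2\le 0$ follows from Zariski's lemma as you indicate.
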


\begin{proof}
Let $c:S \to \bar S$ be a minimal model. Write $\sM=c^*{\bar\sM}(-E)$ for some divisor
$E$ on $S$ supported on the exceptional locus of $c$. Observe that $E$ is effective and $\bar\sM$ is nef since $\sM$ is
nef.
Therefore, the natural map $T_S \to c^*T_{\bar S}$ induces 
an inclusion 
$H^0(S,T_S^{\otimes r}\otimes \sM^{\otimes -1})\subset 
H^0(\bar S,T_{\bar S}^{\otimes r}\otimes {\bar\sM}^{\otimes -1})$.
If $g(B)\ge 1$
then $\pi$ induces a morphism $\bar S\to B$ and
$\bar S$ is a ruled surface over $B$. We may thus assume that $S\to B$ is smooth.
Since $\sM\cdot S_b=2r$ and $T_{S/C}\cdot S_b=2$ for $b\in B$, we must have
$$H^0(S,T_{S/B}^{\otimes r}\otimes \sM^{\otimes -1})
=H^0(S,T_S^{\otimes r}\otimes \sM^{\otimes -1}).$$
Let us assume to the contrary that $h^0(S,T_S^{\otimes r}\otimes \sM^{\otimes -1})\neq 0$.
Then $r(-K_{S/B})\sim c_{1}(\sM)+\pi^*\Delta$ where $\Delta$ is an effective divisor on C and
$K_{S/B}$ is nef and big. But $K_{S/B}^2=0$ for any (geometrically) ruled surface, a contradiction.
\end{proof}

\subsection{Tools}The proof of the main Theorem will apply rational curves on $X$.
Our notation is consistent with that of \cite{kollar96}.

Let $X$ be a smooth complex projective uniruled variety and $H$ an
irreducible component of $\textup{RatCurves}(X)$.
Recall that only general points in
$H$ are in 1:1-correpondence with the associated curves in $X$. Let $\ell$
be a rational curve corresponding to a general point in $H$, with
normalization morphism $f:\bP^1\to \ell\subset X$. We denote by $[\ell]$ or
$[f]$ the point in $H$ corresponding to $\ell$.

We say that $H$ is a \emph{dominating family of rational curves on $X$}
if the corresponding universal family dominates $X$.  A dominating
family $H$ of rational curves on $X$ is called \emph{unsplit} if it is
proper.  It is called \emph{minimal} if, for a general point $x\in X$,
the subfamily of $H$ parametrizing curves through $x$ is proper.  

Let $H_1, \dots, H_k$ be minimal dominating families of rational curves on $X$.  For
each $i$, let $\overline H_i$ denote the closure of $H_i$ in $\textup{Chow}(X)$.  We define
the following equivalence relation on $X$, which we call $(H_1, \dots,
H_k)$-equivalence.  Two points $x,y\in X$ are $(H_1, \dots, H_k)$-equivalent if they
can be connected by a chain of 1-cycles from $\overline H_1\cup \cdots \cup \overline
H_k$.  By \cite{campana92} (see also \cite[IV.4.16]{kollar96}), there exists a proper
surjective morphism $\pi_0:X_0 \to Y_0$ from a dense open subset of $X$
onto a normal variety whose fibers are $(H_1, \dots, H_k)$-equivalence classes.  We
call this map the \emph{$(H_1, \dots, H_k)$-rationally connected quotient of $X$}.
For more details see \cite{kollar96}.

\begin{lemma}\label{lemma:extending_in_codim_1}
Let $X$ be a smooth complex projective variety and $H_1, \dots, H_k$ unsplit
dominating families of rational curves on $X$. Let $\pi_0:X_0 \to Y_0$ be the $(H_1, \dots, H_k)$-rationally
connected quotient of $X$.
If the geometric generic fiber is isomorphic to a projective space, then 
$\pi_0$ is a $\bP^d$-bundle in codimension one in $Y_0$ with $d:=\dim(X_0)-\dim(Y_0)$.
\end{lemma}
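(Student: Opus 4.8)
The plan is to show that the rationally connected quotient $\pi_0 \colon X_0 \to Y_0$ extends over a large open subset of $Y_0$ to an honest projective bundle, using the properness (unsplitness) of the families $H_i$ together with the structure of the geometric generic fiber. First I would recall the general nonsense from \cite{kollar96} (see IV.4.16) that the $(H_1,\dots,H_k)$-rationally connected quotient exists as a proper surjective morphism over a dense open subset, with $(H_1,\dots,H_k)$-equivalence classes as fibers. The key point to exploit is that since all the $H_i$ are \emph{unsplit} (hence proper) and dominating, the locus where chains of curves behave badly has high codimension: by a standard dimension count on the universal families $\textup{Univ}(H_i) \to X$, the bad locus in $X_0$ (where fibers jump or the map fails to be equidimensional) maps into a subset of $Y_0$ of codimension at least two. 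This is precisely the content of "extending in codimension one" that I want to establish.

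The main steps I would carry out, in order, are as follows. \textbf{Step 1:} Replace $Y_0$ by the open subset $Y_1 \subseteq Y_0$ over which $\pi_0$ is proper and equidimensional with irreducible fibers of dimension $d$; using that the $H_i$ are unsplit one checks, via the boundedness of the family of connecting chains, that $Y_0 \setminus Y_1$ has codimension at least two in $Y_0$. \textbf{Step 2:} On $Y_1$ the morphism $\pi_1 \colon X_1 \to Y_1$ is a proper equidimensional fibration whose geometric generic fiber is $\bP^d$; I would then argue that \emph{every} geometric fiber over $Y_1$ is in fact isomorphic to $\bP^d$. Here the unsplitness is again decisive: a degeneration of $\bP^d$ inside a flat (or merely equidimensional) family of rationally chain connected fibers built from proper families of minimal rational curves cannot acquire new irreducible components or drop dimension, so the fibers stay irreducible and reduced, and the rigidity of $\bP^d$ (no nearby flat limits other than $\bP^d$ when the fiber dimension and degree of the rational curves are fixed) forces each fiber to be $\bP^d$. \textbf{Step 3:} Having an equidimensional family all of whose fibers are $\bP^d$ over the smooth-in-codimension-one base $Y_1$, I would apply a Fujita-type criterion together with the relative automorphism structure (or \cite{fujita75}, Corollary 5.4, exactly as used in the proof of Lemma \ref{lemma:fibration_over_curve_projective_space}) to conclude that $\pi_1$ is Zariski-locally trivial, i.e.\ a genuine $\bP^d$-bundle over $Y_1$.

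I expect \textbf{Step 2} to be the main obstacle. The difficulty is that equidimensionality and "geometric generic fiber $\simeq \bP^d$" do not by themselves prevent special fibers from degenerating into other rationally connected varieties (for instance a union of $\bP^d$'s, or a singular degeneration); one genuinely needs the input that the fibers are $(H_1,\dots,H_k)$-equivalence classes assembled from the \emph{proper} families $H_i$. The clean way to handle this is to invoke the characterization of $\bP^d$ via its variety of minimal rational tangents or via the fact that a smooth Fano with the same numerical invariants and a covering unsplit family of lines of the expected anticanonical degree must be $\bP^d$; propagating this from the generic fiber to all fibers over $Y_1$ is where the real work lies, and it is exactly here that restricting to codimension one (rather than all of $Y_0$) is what makes the statement provable rather than false. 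Once fiberwise rigidity is secured, Steps 1 and 3 are routine dimension counts and a citation.
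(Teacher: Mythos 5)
Your outline has the right global shape (pass to an equidimensional model with integral fibers away from a codimension-two subset, then show the fibration is a $\bP^d$-bundle there), but the step you yourself flag as the main obstacle --- your Step 2 --- is where the proposal has a genuine gap, and the tools you propose for it (rigidity of flat limits of $\bP^d$, variety of minimal rational tangents) are not the mechanism that makes the lemma work. The paper does not prove separately that every fiber is $\bP^d$ and then ask for local triviality; instead it restricts $\pi_0$ to the preimage of a \emph{general complete intersection curve} $C_0\subset Y_0$ (this is exactly how ``in codimension one'' enters), uses Tsen's theorem over $C_0$ to produce a line bundle $\sL_{C_0}$ on $X_{C_0}$ restricting to $\sO_{\bP^d}(1)$ on the generic fiber, and then --- and this is the decisive use of unsplitness that your sketch replaces by a vague rigidity claim --- invokes \cite[Proposition IV 3.13.3]{kollar96} to see that $N_1(F)$ of any fiber is generated by the classes of the unsplit families. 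This forces $c_1(\sM)=r\,c_1(\sL_{C_0})$ in $N_1(X_{C_0}/C_0)$ for an ample $\sM$, hence $\sL_{C_0}$ is relatively ample, and \cite[Corollary 5.4]{fujita75} then does \emph{all} of the remaining work at once: it shows every fiber is $\bP^d$ and the family is a bundle. Without producing such a relatively ample line bundle of fiber degree one, your Step 3 cannot even be launched, since Fujita's criterion is a statement about polarized fibrations; and your Step 2, as stated, has no proof --- flat equidimensional families with generic fiber $\bP^d$ can certainly degenerate, and ruling this out here requires precisely the $N_1$-generation argument, not an abstract rigidity principle.

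Two smaller points. Your Step 1 is correct but is not something one reproves by a dimension count: the paper simply cites \cite[Lemma 2.2]{adk08} for the existence of an equidimensional model with integral fibers outside a codimension-two subset of $Y_0$. And note that the Brauer obstruction to turning an \'etale-locally trivial $\bP^d$-fibration into a Zariski-locally trivial one is killed in the paper's argument exactly because everything is reduced to a curve base (Tsen), which is another reason the ``general complete intersection curve'' reduction is not an optional convenience but the load-bearing step.
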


\begin{proof}By \cite[Lemma 2.2]{adk08}, we may assume that $\pi_0$ is
a proper surjective equidimensional morphism with integral fibers.
Let $C_0\subset Y_0$ be a general complete intersection curve. Set $X_{C_0}:=\pi_0^{-1}(C_0)$. Then
$X_{C_0}$ is a smooth variety. 
Let $\eta$ be the generic point of $C_0$ and
let $\sL_{C_0}$ be a line bundle on $X_{C_0}$ that restricts to 
$\sO_{\bP^d_{k(\eta)}}(1)$ on ${X_{C_0}}_{\eta}\simeq \bP^d_{k(\eta)}$ ($d \ge 1$)
(see the proof of
Lemma \ref{lemma:fibration_over_curve_projective_space}). Let $\sM$ be an ample
line bundle on
$X$ and
$r$ a positive integer such that $\sM_{|{X_{C_0}}_{\eta}}\simeq \sO_{\bP^d_{k(\eta)}}(r)$.

For each $i$, denote by
$H_i^j$, $1\leq j\leq n_i$, 
the unsplit covering families of
rational curves on $X_{C_0}$ whose general members correspond to rational 
curves on $X$ from the family $H_i$. Then 
$\pi_{C_0}:={\pi_0}_{|X_{C_0}}:X_{C_0} \to C_0$ is
the $(H_1^1,\ldots,H_1^{n_1},\ldots,H_k^1,\ldots,H_k^{n_k})$-rationally
connected quotient of $X_{C_0}$. 
Let $F$ be a fiber of $\pi_{C_0}$. Let
$[H_i^j]$ denote the class of a member of $H_i^j$ in $N_1(F)$ and
$\cH:=\{[H_i^j]\mid i=1,\dots,k, j=1,\dots,n_i\}$.  Then by
\cite[Proposition IV 3.13.3]{kollar96}, $N_1(F)$ is generated by $\cH$.
Therefore any curve contained
in any fiber of $\pi_{C_0}$ is numerically proportional in $N_1(X_{C_0}/C_0)$ to
a linear combination of the $[H_i^j]$'s.
Hence $N_1(X_{C_0}/C_0)$ is
generated by $\cH$ and 
$c_1(\sM_{X_{C_0}})= r c_1(\sL_{C_0}) \in N_1(X_{C_0}/C_0)$.
Thus $\sL_{X_{C_0}}$ is ample$/C_0$ and the claim follows from
\cite[Corollary 5.4]{fujita75}.
\end{proof}

\begin{notation}Let $X$ be a normal variety and $\sQ$ be a coherent torsion free sheaf of
$\sO_X$-modules. Say that a curve $C \subset X$ is a general complete intersection curve for $\sQ$ in the sense of
Mehta-Ramanathan if $C=H_1\cap\cdots\cap H_{\dim(X)-1}$, where $H_i \in |m_i H|$ are general, $H$ is an ample line
bundle on $X$ and the $m_i\in\bN$ are large enough so that the Harder-Narasimhan filtration of $\sQ$ commutes with
restriction to $C$.
\end{notation}

The following result was established in \cite[Proposition 4.1]{paris}.

\begin{lemma}\label{lemma:miyaoka}
Let $X$ and $Y$ be a smooth complex projective varieties with 
$\dim(Y) \ge 1$, $X_0$ an open subset of $X$ 
with $\codim_X(X\setminus X_0)\geq 2$, $Y_0$ a dense open subset of $Y$
and $\pi_0:X_0\to Y_0$
a proper surjective equidimensional morphism. 
Let $C\subset X_0$ be a general complete intersection curve for $\pi_0^*\Omega_{Y_0}^1$ in the sense of
Mehta-Ramanathan.
If $(\pi_0^*\Omega_{Y_0}^1)_{|C}$ is not nef then $Y$ is uniruled.
\end{lemma}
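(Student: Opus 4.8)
The plan is to reduce the statement to Miyaoka's generic semipositivity theorem, which asserts that if $Y$ is \emph{not} uniruled then $\Omega_Y^1{}_{|C_Y}$ is nef for a general complete intersection curve $C_Y\subset Y$ (see \cite{kollar96}). Equivalently, it suffices to exhibit a general complete intersection curve $C_Y$ on $Y$ for which $\Omega_Y^1{}_{|C_Y}$ fails to be nef; uniruledness of $Y$ then follows as the contrapositive of that theorem.

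First I would pass from the curve $C\subset X_0$ to its image on $Y$. Since $C$ is a general complete intersection curve it is not contained in any fiber of $\pi_0$, so $C_Y:=\pi_0(C)$ is a curve and the induced morphism $C\to C_Y$ is finite and surjective. Writing $\nu:\widetilde{C_Y}\to C_Y\into Y$ for the normalization, the morphism $C\to\widetilde{C_Y}$ is again finite surjective and one has a canonical identification $(\pi_0^*\Omega_{Y_0}^1)_{|C}\simeq g^*(\nu^*\Omega_Y^1)$, where $g:C\to\widetilde{C_Y}$ is the induced map. Because a finite surjective morphism of smooth curves reflects nefness of vector bundles — a quotient of negative degree on the target pulls back to a quotient of negative degree on the source — the hypothesis that $(\pi_0^*\Omega_{Y_0}^1)_{|C}$ is not nef forces $\nu^*\Omega_Y^1$ to be non-nef on $\widetilde{C_Y}$.

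It then remains to arrange that $C_Y$ be a general complete intersection curve on $Y$, so that Miyaoka's theorem applies directly. To this end I would choose the curve $C$ \emph{adapted} to the fibration: fix a very ample divisor $A$ on $Y$ and, for $m\gg 0$, let $C_Y=A_1\cap\cdots\cap A_{\dim Y-1}$ be a general complete intersection curve on $Y$ with $A_i\in|mA|$; by Bertini $C_Y$ is smooth, and since $\codim_X(X\setminus X_0)\geq 2$ and $\pi_0$ is equidimensional with integral fibers, the preimage $\pi_0^{-1}(C_Y\cap Y_0)\subset X_0$ is irreducible of dimension $d+1$ (with $d=\dim X_0-\dim Y_0$). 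Cutting it by $d$ general members of a very ample linear system on $X$ produces a curve $C$ lying in $X_0$, finite and surjective over $C_Y$ via $\pi_0$. With this choice $C_Y=\pi_0(C)$ is by construction a general complete intersection curve on $Y$, and the previous paragraph yields that $\Omega_Y^1{}_{|C_Y}$ is not nef, whence $Y$ is uniruled by Miyaoka's theorem.

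The main obstacle is to reconcile this adapted curve with the hypothesis, which concerns a general complete intersection curve for $\pi_0^*\Omega_{Y_0}^1$ \emph{in the sense of Mehta--Ramanathan}, that is, one cut out by general members of high multiples of a single ample bundle on $X$, rather than by the mixed (pullback plus very ample) divisors used above. The point to verify is that non-nefness of $\pi_0^*\Omega_{Y_0}^1$ is detected equally by both kinds of curve; equivalently, that the sign of $\mu_{\min}(\pi_0^*\Omega_{Y_0}^1)$ is independent of this choice. I would establish this by analysing the Harder--Narasimhan filtration of the pullback sheaf $\pi_0^*\Omega_{Y_0}^1$: since this sheaf is trivial on every fiber of $\pi_0$ and the filtration is unique, its minimal destabilizing quotient is constant along the fibers and hence descends to a quotient of $\Omega_{Y_0}^1$ on $Y_0$ of negative slope. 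Non-nefness on a general Mehta--Ramanathan curve on $X$ thus translates into non-nefness of $\Omega_Y^1$ on a general complete intersection curve of $Y$, which is precisely what the adapted construction makes transparent and what feeds into Miyaoka's theorem.
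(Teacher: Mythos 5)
Your overall strategy---push the curve down to $Y$ and invoke Miyaoka's generic semipositivity theorem---is not the route the paper takes, and as written it has a genuine gap at exactly the point you flag as ``the main obstacle.'' The hypothesis concerns a curve $C=H_1\cap\cdots\cap H_{\dim(X)-1}$ cut out by high multiples of a single ample bundle \emph{on $X$}, so the negativity you detect is $c_1(\sQ)\cdot(\pi_0)_*(H^{\dim(X)-1})<0$ for some quotient $\sQ$; Miyaoka's theorem, in the form you can quote, requires negativity against $A^{\dim(Y)-1}$ for $A$ ample \emph{on $Y$}. Your two bridging claims do not close this gap. First, the assertion that the minimal destabilizing quotient of $\pi_0^*\Omega^1_{Y_0}$ (computed with respect to a polarization on $X$) ``is constant along the fibers and hence descends'' to a quotient of $\Omega^1_{Y_0}$ is not proved and is not obvious: the Harder--Narasimhan filtration of a pullback under a fibration need not consist of pullbacks, and the only descent mechanism available from uniqueness is Galois invariance under a \emph{finite} cover, not descent along positive-dimensional fibers. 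Second, the claim that ``the sign of $\mu_{\min}$ is independent of this choice'' of polarization is precisely the nontrivial content you would need, and for an arbitrary torsion-free sheaf it is false---the sign of the minimal slope genuinely depends on the polarization. Even granting descent, $(\pi_0)_*(H^{\dim(X)-1})$ is a movable class that is in general not a complete-intersection class on $Y$, so you would need both a generic-semipositivity statement for movable classes and a Mehta--Ramanathan restriction theorem adapted to the family of image curves; neither is available off the shelf, and your ``adapted curve'' construction does not help because that curve is not one to which the hypothesis applies.

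The paper avoids all of this by never passing through Miyaoka's semipositivity theorem. It cuts down to a complete-intersection subvariety $Z$ of dimension $\dim(Y)$, so that $\varphi_0:=\pi_0|_{Z_0}$ is \emph{finite} over $Y_0$; takes the maximally destabilizing subsheaf $\sE$ of $i_*(\varphi_0^*T_{Y_0})$, which has positive slope by your same non-nefness observation; descends $\sE$ to a saturated subsheaf $\sG\subset T_{Y_0}$ after passing to a Galois closure (here finiteness and uniqueness of $\sE$ make the descent legitimate); shows via \cite[Propositions 29 and 30]{kebekus_solaconde_toma07} that $\sG_{|\varphi_0(C)}$ and $(\sG\otimes\sG\otimes(T_{Y_0}/\sG)^*)_{|\varphi_0(C)}$ are ample, which forces $\sG$ to be a foliation; and concludes that $Y$ is uniruled from \cite[Theorem 1]{kebekus_solaconde_toma07}, which produces rationally connected leaves through points of a curve along which the foliation is ample. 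If you want to keep your outline, you must either replace the appeal to Miyaoka by this foliation argument, or supply proofs of the descent and polarization-independence claims---and the latter is essentially equivalent in difficulty to the lemma itself.
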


\begin{proof}Let us sketch the proof for the reader's convenience.
Fix an ample line bundle $H$ on $X$, and consider  general elements $H_i \in |m_i H|$, for 
$i\in\{1, \ldots, \dim(X)-1\}$, where the $m_i\in\bN$ are large enough so that the 
Harder-Narasimhan filtration of $\pi_0^* \Omega_{Y_0}^1$ commutes with restriction to 
$C:=H_1 \cap \cdots \cap H_{\dim(X)-1}$. Setting $Z:= H_1 \cap \cdots \cap H_{\dim(X)-\dim(Y)}$ and $Z_0:= Z \cap X_0$, we may assume that $Z$ is a smooth variety of 
dimension $\dim(Y)$, and that the restriction $\varphi_0 := \pi_0 |_{Z_0}$ is a finite morphism.  

By the hypothesis $(\varphi_0^*\Omega_{Y_0}^1)|_C$ is not nef, therefore 
$(\varphi_0^\ast T_{Y_0})|_C$ contains a subsheaf with positive slope.  Thus if we denote by 
$i: Z_0 \hookrightarrow Z$ the inclusion and by $\sF$ the reflexive sheaf 
$i_*(\varphi_0^* T_{Y_0})$, then the maximally destabilizing subsheaf $\sE$ of 
$\sF$ has positive slope (with respect to $H_{|Z}$).

Let $K$ be a splitting field of the function field $K(Z_0)$ over $K(Y_0)$, and let $\psi: T \to Z$ be 
the normalization of $Z$ in $K$. Consider $T_0:=\psi^{-1}(Z_0)$,  and let 
$j: T_0 \hookrightarrow T$ be the inclusion. If we denote by $\psi_0$ the restriction of $\psi$ to 
$T_0$, then the reflexive sheaf 
$\sF' := (\psi^* \sF)^{**} = j_*(\psi_0^* \varphi_0^* T_{Y_0})$ 
contains the sheaf $(\psi^* \sE)^{**}$. 
Notice that $(\psi^* \sE)^{**}$ has positive slope.
Consequently the 
maximally destabilizing subsheaf $\sE'$ of $\sF'$ has positive slope. Hence by 
replacing $Z_0$ with $T_0$, $\varphi_0$ with $\varphi_0 \circ \psi_0$, and 
$(\sF,\sE)$ with $(\sF',\sE')$ if necessary, we may assume 
that $K(Z_0) \supset K(Y_0)$ is a Galois extension with Galois group $G$.

Because of its uniqueness, the maximally destabilizing subsheaf $\sE$ of $\sF$ 
is invariant under the action of $G$. Thus by replacing $Z_0$ with another open subset of $Z$ if 
necessary, we may assume that there exists a saturated subsheaf $\sG$ of $T_{Y_0}$ 
such that $\sE=i_*( \varphi_0^*\sG)$.  

As $\sE$ has positive slope, it follows from 
\cite[Proposition 29 and Proposition 30]{kebekus_solaconde_toma07} that the 
vector bundles $\sE_{|C}$ and 
$(\sE\otimes \sE \otimes (\sF/\sE)^*)_{|C}$ are ample. 
The morphism $\varphi_0$ being finite, this implies that  $\sG_{|{\varphi_0(C)}}$ and 
$(\sG\otimes \sG \otimes (T_{Y_0}/\sG)^*)_{|{\varphi_0(C)}}$ are 
ample vector bundles too. In particular we deduce from this that 
$\textup{Hom}(\sG \otimes \sG, T_{Y_0}/\sG)=0 $, because the 
deformations of the curve $\varphi_0(C)$ dominate the variety $Y_0$. As a consequence 
$\sG$ is a foliation on $Y_0$.

Finally, by extending $\sG$ to a foliation $\widetilde{\sG}$ on the whole variety 
$Y$, we can conclude by using \cite[Theorem 1]{kebekus_solaconde_toma07}. Indeed it follows from the fact that 
$\widetilde{\sG}_{|{\varphi_0(C)}}$ is ample that the leaf of the foliation 
$\widetilde{\sG}$ passing through a general point of $\varphi_0(C)$ is rationally 
connected; in particular $Y$ is uniruled.
\end{proof}

The proof of our main result is based on the following result wich appears 
essentially in \cite{paris}.

\begin{cor}\label{corollary:miyaoka}
Let $X$ be a smooth complex projective variety, $X_0$ an open subset of $X$ 
with $\codim_X(X\setminus X_0)\geq 2$, $Y_0$ a smooth variety
with $\dim(Y_0)\ge 1$
and $\pi_0:X_0\to Y_0$
a proper surjective equidimensional morphism. 
Assume that the generic fiber of $\pi_0$ is isomorphic to a projective space.
Let $C$ be a general complete intersection curve for $\pi_0^*\Omega_{Y_0}^1$ in the sense of
Mehta-Ramanathan.
If $(\pi_0^*\Omega_{Y_0}^1)_{|C}$ is not nef then
there exists a minimal free morphism $f:\bP^1\to Y_0$.
\end{cor}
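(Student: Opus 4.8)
The plan is to reduce the statement to Lemma~\ref{lemma:miyaoka}, which already yields uniruledness of a \emph{projective} model, and then to exploit the two hypotheses peculiar to the Corollary — that $\pi_0$ has projective space fibers and that $\codim_X(X\setminus X_0)\ge 2$ — in order to relocate a free rational curve into the open subset $Y_0$ itself. First I would choose a smooth projective compactification $Y_0\subseteq Y$ (by Nagata compactification followed by resolution of singularities). Then $Y_0$ is a dense open subset of the smooth projective variety $Y$, so all hypotheses of Lemma~\ref{lemma:miyaoka} hold, with the curve $C$ and the non-nefness of $(\pi_0^*\Omega^1_{Y_0})_{|C}$ exactly as given; hence $Y$ is uniruled. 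This is not yet the conclusion: uniruledness of $Y$ only produces free rational curves through general points of $Y$, and since $Y\setminus Y_0$ may well be a divisor, such curves cannot in general be deformed so as to avoid the boundary while remaining in $Y$.

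To get around this I would work upstairs on $X$, where the complement of $X_0$ has codimension at least two. Since the generic fiber of $\pi_0$ is a projective space, hence rationally connected, the dominant rational map $\pi_0:X\map Y$ has rationally connected general fibers over the uniruled base $Y$; therefore $X$ is uniruled. Lifting a free rational curve of $Y$ to a rational curve of $X$ and, when $\dim X>\dim Y$, attaching to it free rational curves contained in the (rationally connected) fibers, one produces after smoothing, in the manner of \cite{kollar96}, a \emph{free} rational curve $\gamma:\bP^1\to X$ that is \emph{horizontal}, i.e.\ not contracted by $\pi_0$; horizontality is preserved under smoothing because the attached teeth are vertical. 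Because $\gamma$ is free and $\codim_X(X\setminus X_0)\ge 2$, a general deformation of $\gamma$ has image contained in $X_0$, and I keep such a deformation, still denoted $\gamma$, which remains free and horizontal.

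It then remains to push $\gamma$ down and to arrange minimality. Set $f:=\pi_0\circ\gamma:\bP^1\to Y_0$, a nonconstant morphism with image in $Y_0$. Over the generic point of $Y_0$ the morphism $\pi_0$ is smooth, so $d\pi_0$ induces a morphism $\gamma^*T_X\to f^*T_{Y_0}$ which is surjective away from finitely many points of $\bP^1$; as $\gamma^*T_X$ is nef, its image is a full-rank nef subsheaf $\sF\subseteq f^*T_{Y_0}$ with torsion quotient. An elementary argument on $\bP^1$ — a negative quotient line bundle of $f^*T_{Y_0}$ would receive a nonzero map from the nef sheaf $\sF$, which is impossible, forcing $\sF$ into a proper subbundle and contradicting full rank — then shows that $f^*T_{Y_0}$ is itself nef, that is, $f$ is free. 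Finally, free rational curves on $Y_0$ of bounded anticanonical degree form a bounded family, so among them I choose one of minimal degree; this is the required minimal free morphism $f:\bP^1\to Y_0$.

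The main obstacle is precisely the passage from the conclusion of the Lemma — a rational curve in the \emph{projective} model $Y$ — to a free rational curve genuinely contained in the open set $Y_0$. Its resolution depends on combining the two extra hypotheses: the codimension-two condition on $X_0$ lets one deform the curve into $X_0$, something one cannot do downstairs because the boundary of $Y_0$ may be a divisor, while the projective space (hence rationally connected) fibers are what produce horizontal free curves on $X$ in the first place and what allow freeness to be transported back down to $Y_0$ through the generic smoothness of $\pi_0$.
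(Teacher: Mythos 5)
Your argument is correct and follows essentially the same route as the paper: compactify $Y_0$ to a smooth projective $Y$, apply Lemma \ref{lemma:miyaoka} to conclude that $Y$ is uniruled, use the projective-space (hence rationally connected) fibers of $\pi_0$ to lift rational curves of $Y$ to $X$, invoke freeness together with $\codim_X(X\setminus X_0)\ge 2$ to move the lifted curve into $X_0$, and push it back down to $Y_0$. The only differences are in execution: the paper lifts a minimal dominating family $H_Y$ directly to a dominating family $H_X$ on $X$ and quotes \cite[Proposition II 3.7]{kollar96} and \cite[Corollary IV 2.9]{kollar96}, whereas you build the horizontal free curve on $X$ by comb-smoothing, descend freeness via the nefness of the image of $\gamma^*T_X$ in $f^*T_{Y_0}$, and arrange minimality at the end by taking a free curve of minimal degree.
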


\begin{proof}
Let $Y$ be a smooth projective variety containing $Y_0$ as a dense open subset.
By Lemma \ref{lemma:miyaoka}, $Y$ is uniruled. Let $H_Y$ be a minimal dominating family of rational curves on $Y$. Since the generic fiber of $\pi_0$ is isomorphic to a projective space, there
exists a dominating family $H_X$ of rational curves on $X$ such that for a general member $[f]\in H_X$, $[\pi_0\circ f]$ is a general member of $H_Y$. By \cite[Proposition II 3.7]{kollar96}, if
$[f]\in H_X$ is a general member then $f(\bP^1)\subset X_0$. The claim follows from
\cite[Corollary IV 2.9]{kollar96}.
\end{proof}

The following Lemma is certainly well known to experts.
We include a proof for lack of an adequate reference.

\begin{lemma}\label{lemma:morphism}
Let $X$ be a smooth complex variety and $H$ a minimal dominating family of rational 
curves on $X$. Let $x$ be a general point in $X$ and $[\ell]\in H$ with $x\in\ell$. If 
$T_{\ell,x}$ does not depend on $\ell \ni x$ then 
there exists a non empty open subset $X_0$ in $X$ and a proper surjective morphism 
$\pi_0:X_0\to Y_0$ onto a variety $Y_0$ such that any fiber 
of $\pi_0$ is a rational curve from the family $H$.
\end{lemma}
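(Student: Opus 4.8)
The plan is to convert the tangency hypothesis into a one–dimensional distribution on a dense open subset of $X$ whose integral curves are exactly the members of $H$, and then to extract the desired fibration from the universal family. First I would recall the universal family $p\colon U\to H$, a $\bP^1$-bundle, together with the evaluation morphism $q\colon U\to X$, which is dominant because $H$ is a dominating family (see \cite{kollar96}); for general $x\in X$ the fibre $q^{-1}(x)$ parametrises, through $p$, the subfamily $H_x$ of curves passing through $x$, and this subfamily is proper since $H$ is minimal. The hypothesis that $T_{\ell,x}$ does not depend on $[\ell]\in H_x$ for general $x$ says precisely that $x\mapsto T_{\ell,x}\subseteq T_{X,x}$ is a well-defined rational assignment of tangent directions; after restricting to a dense open subset $X_0$ and saturating, it becomes a rank-one subsheaf $D\subseteq (T_X)_{|X_0}$.

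Next I would identify the curves of $H$ with the leaves of $D$. A general member $\ell$ of $H$ has the property that its general point is a general point of $X$ (as $q$ is dominant), so $T_{\ell,y}=D_y$ at a general $y\in\ell$; since both are line subbundles of $T_X$ along $\ell$, the curve $\ell$ is everywhere tangent to $D$, i.e. it is an integral curve of the rank-one (hence integrable) foliation determined by $D$. Local uniqueness of integral curves on the smooth locus then forces two general members through a common general point to coincide. To promote this to \emph{every} member through a general point, I would invoke the finiteness of the tangent map $\tau_x\colon H_x\to \bP(T_{X,x})$ for minimal families (a bend–and–break statement, due to Kebekus): since by hypothesis the image of $\tau_x$ is a single point, $H_x$ is finite, hence $q$ is generically finite; being also dominant, for general $x$ every member of $H_x$ is a general member of $H$. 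Together with the uniqueness above this shows that through a general point of $X$ there passes a \emph{unique} curve of $H$, equivalently that $q$ is birational.

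Finally I would build $\pi_0$ from this birationality: choosing a dense open $Y_0\subseteq H$ over which the restriction of $q$ to $p^{-1}(Y_0)$ is an isomorphism onto an open subset $X_0:=q(p^{-1}(Y_0))$ of $X$, the $\bP^1$-bundle $p^{-1}(Y_0)\to Y_0$ transports through $q$ to a proper surjective morphism $\pi_0\colon X_0\to Y_0$ whose fibre over $[\ell]$ is $q(p^{-1}([\ell]))=\ell$, a rational curve from $H$, as required. The step I expect to be the main obstacle is exactly the claim that a \emph{unique} curve of $H$ passes through a general point (equivalently that $q$ is birational): the tangency hypothesis controls only the first-order behaviour at $x$, so ruling out a positive-dimensional family $H_x$ of curves sharing the line $D_x$, as well as degenerate configurations in which the curves through nearby points all meet (as for a pencil of lines through a fixed point), genuinely requires the finiteness of the tangent map together with the fact that $H$ is an irreducible component of $\RatCurves(X)$, so that its members deform freely and the leaves of $D$ sweep out $X_0$ in disjoint curves.
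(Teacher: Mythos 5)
Your proposal is correct and follows essentially the same route as the paper: both arguments rest on the finiteness of the tangent map for a minimal family (the paper invokes \cite[Proposition 2.3]{hwang} to conclude $d=0$ in the splitting $f^*T_X\simeq \sO_{\bP^1}(2)\oplus \sO_{\bP^1}(1)^{\oplus d}\oplus\sO_{\bP^1}^{\oplus (n-d-1)}$, which is exactly your Kebekus step giving $\dim H_x=0$), and both then upgrade the statement that finitely many members with a common tangent direction pass through a general point to the statement that exactly one does, before transporting the $\bP^1$-bundle structure of the universal family through the now-birational evaluation morphism. The only divergence is in that uniqueness step, where you use local uniqueness of integral curves of the rank-one distribution $D$, whereas the paper notes that the evaluation $\bar e$ is \'etale on the locus of free curves (since $d=0$) and derives a contradiction from a curve $C\subset\bar U$ with $\dim(\bar\pi(C))=1$ whose tangent directions and fiber directions have the same image under $d\bar e$ --- two formulations of the same first-order argument.
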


\begin{proof}Let $[f]\in H$ be a general member.
By \cite[Corollary IV 2.9]{kollar96},
$f^*T_X\simeq \sO_{\bP^1}(2)\oplus \sO_{\bP^1}(1)^{\oplus d}
\oplus\sO_{\bP^1}^{\oplus (n-d-1)}$
with $d:=-K_X\cdot f_*\bP^1-2$. 
Let $x$ be a general point in $X$ with $x\in\ell:=f(\bP^1)$.
By
\cite[Proposition 2.3]{hwang}, $d=0$ using the fact that 
$T_{\ell,x}$ does not depend on $\ell \ni x$. 

Let $\bar H$ be the normalization of the closure of 
$H$ in $\textup{Chow}(X)$ and $\bar U$ the normalization of the universal family. Let us denote by
$\bar\pi:\bar U\to \bar H$ and $\bar e:\bar U\to X$ the universal morphisms.
By shrinking $H$ if necessary, we may assume that $H$ parametrizes free
morphisms. Then $H$ is smooth (see \cite[Theorem I 2.16]{kollar96}) and 
$e:=\bar e_{|U}:U\to X$ is \'etale
where $U:=\bar\pi^{-1}(H)$ (see \cite[Proposition II 3.4]{kollar96}). 

It remains to show that there exists a dense open subset $H_0$ of $H$ such
that the restriction of $\bar e$ to $\bar\pi^{-1}(H_0)$ induces an isomorphism
onto 
the open set $\bar e(\bar\pi^{-1}(H_0)$.
By Zariski's main Theorem, it is enough to prove that $\bar e$ is
birational. 
We argue by contradiction. Then there exists a curve $C\subset \bar U$ such that
$\dim(\bar\pi(C))=1$ and 
$\bar e(C)=\ell$. Let $c$ be a general point in $C$. Then 
$d_c\bar e(T_{C,c})=d_c\bar e(T_{\bar\pi^{-1}(\bar\pi(c),c})=T_{\ell,\bar e(c)}$.
But that contradicts the fact that $\bar e$ is \'etale at $c$.
The claim follows.
\end{proof}

\subsection{Characterizations of projective spaces and hyperquadrics}

The proof of the main Theorem stated in the introduction is based on the following result whose
proof is similar to that of \cite[Theorem 6.3]{adk08}.

\begin{prop}\label{proposition:picard_number_one}
Let $X$ be a smooth complex projective $n$-dimensional variety with
$\rho(X)=1$ and
$\sE$ an ample vector bundle on $X$ of rank $r+k$ with $r\ge 1$ and $k\ge 0$. If 
$h^0(X,T_{X}^{\otimes r}\otimes \det(\sE)^{\otimes -1})\neq 0$, then
either $X\simeq \bP^{n}$, or 
$k=0$ and $X\simeq Q_{n}$ ($n\neq 2$).
\end{prop}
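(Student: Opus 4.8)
The plan is to reduce everything to an isomorphism statement: it suffices to prove that $X$ is isomorphic either to $\bP^n$ or to $Q_n$. Indeed, once $X\simeq\bP^n$, writing $\det(\sE)\simeq\sO_{\bP^n}(l)$, ampleness of $\sE$ gives $l\ge r+k$ while Lemma~\ref{lemma:twisted_sections_projective_space} applied to the nonzero section forces $l\le\frac{r(n+1)}{n}$; and once $X\simeq Q_n$, writing $\det(\sE)\simeq\sO_{Q_n}(m)$ and using $T_{Q_n}^{\otimes r}\simeq(T_{Q_n}(-1))^{\otimes r}(r)$, the semistability part of Lemma~\ref{lemma:quadric_twisted_sections} forces $m\le r$, which combined with $m\ge r+k$ yields $m=r$ and $k=0$. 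The case $n=2$ is automatically excluded because $Q_2\simeq\bP^1\times\bP^1$ has Picard number $2$. So the whole content is the recognition of $X$.

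First I would show that $X$ is a Fano manifold. Since $\rho(X)=1$ it is enough to prove that $X$ is uniruled, for then $-K_X$ is a positive multiple of the ample generator. If $X$ were not uniruled, then by Miyaoka's generic semipositivity theorem $\Omega^1_X$ would be generically nef, so that $\Omega^1_X{}^{\otimes r}{}_{|C}$ is nef on a general Mehta-Ramanathan complete intersection curve $C$; dualizing, $T_X^{\otimes r}{}_{|C}$ has non-positive maximal slope, and since $\det(\sE)_{|C}$ is ample of positive degree the bundle $(T_X^{\otimes r}\otimes\det(\sE)^{\otimes -1})_{|C}$ has negative maximal slope, hence no nonzero sections. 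As $C$ moves in a family covering $X$, the given section would restrict nontrivially to such a $C$, a contradiction. Thus $X$ is Fano.

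Next I would fix a minimal dominating family $H$ of rational curves; because $\rho(X)=1$ it is unsplit. For a general member, with normalization $f\colon\bP^1\to\ell$, one has $f^*T_X\simeq\sO_{\bP^1}(2)\oplus\sO_{\bP^1}(1)^{\oplus p}\oplus\sO_{\bP^1}^{\oplus q}$, with $p+q=n-1$, $-K_X\cdot\ell=p+2$, the $\sO_{\bP^1}(2)$-factor being $df(T_{\bP^1})$. Setting $e:=\det(\sE)\cdot\ell$, ampleness gives $e\ge r+k$, while restricting the section to a general $\ell$ (where it is nonzero, since $\ell$ avoids the zero locus) shows $f^*T_X^{\otimes r}(-e)$ has a section, whence $e\le 2r$. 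The crucial step is to control $p$, that is, the dimension of the variety of minimal rational tangents $\cC_x\subset\bP(T_{X,x})$. To do this I would associate to $s(x)$ its contraction into a nonzero homogeneous form of degree $r$ on $\Omega^1_{X,x}$ and, exactly as in Wahl-type arguments, compare the leading $\sO_{\bP^1}(2)^{\otimes r}$-components of $s_{|\ell}$ as $\ell$ varies through a general point $x$: these assemble, via the global section $s$ and the precise twist by $\det(\sE)$, into an algebraic constraint forcing $\cC_x$ to be projectively either the whole of $\bP(T_{X,x})$ (so $-K_X\cdot\ell=n+1$) or a smooth quadric hypersurface $Q_{n-2}$. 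The case of a fixed tangent direction is disposed of by Lemma~\ref{lemma:morphism}, whose fibration would contradict $\rho(X)=1$.

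Finally I would invoke the recognition theorems for minimal rational tangents: when $\cC_x=\bP(T_{X,x})$, equivalently $-K_X\cdot\ell=n+1$, the Cho--Miyaoka--Shepherd-Barron criterion gives $X\simeq\bP^n$; when $\cC_x$ is a smooth quadric spanning $\bP(T_{X,x})$, its hyperquadric analogue gives $X\simeq Q_n$, the quadratic form $q$ of Lemma~\ref{lemma:quadric_twisted_sections} appearing as the one read off $\cC_x$. I expect the main obstacle to be precisely this middle step: extracting from a single section $s$ enough projective-geometric information on $\cC_x$ to trigger these characterizations. This requires the fine structure of minimal rational curves (smoothness and finiteness of the tangent map, after Kebekus and Hwang--Mok) together with careful bookkeeping of which graded piece of $f^*T_X^{\otimes r}$ carries $s_{|\ell}$ and of how it symmetrizes, since for the quadric the relevant sections genuinely mix the symmetric part (the form $q$) and the determinant part.
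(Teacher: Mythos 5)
Your reduction steps are fine (deducing $k=0$ on the quadric via semistability of $T_{Q_n}$, excluding $n=2$ via $\rho(Q_2)=2$, uniruledness via Miyaoka, and the bound $r+k\le e\le 2r$ on a minimal rational curve), but the heart of your argument is missing. The entire content of the proposition is concentrated in the sentence claiming that the leading components of $s_{|\ell}$ ``assemble \dots into an algebraic constraint forcing $\cC_x$ to be projectively either the whole of $\bP(T_{X,x})$ or a smooth quadric hypersurface.'' No mechanism is given for why a single section of $T_X^{\otimes r}\otimes\det(\sE)^{\otimes -1}$ should cut out the variety of minimal rational tangents as a hypersurface at all, let alone a \emph{smooth quadric}: a priori the tensor $s(x)$ is neither symmetric nor related to $\cC_x$ in any evident way, and for $r\ge 3$ the natural degree-$r$ constraint it imposes on tangent directions would produce a higher-degree hypersurface, not a quadric. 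You acknowledge this is the main obstacle, but that is precisely the step a proof must supply; as written the argument is a plan, not a proof. (Two smaller points: a minimal dominating family is not automatically unsplit when $\rho(X)=1$, though you do not really use this; and the contradiction with $\rho(X)=1$ from the fibration of Lemma~\ref{lemma:morphism} needs a sentence, since the fibration is only defined on an open subset.)

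For contrast, the paper avoids the variety of minimal rational tangents and the recognition theorems of Cho--Miyaoka--Shepherd-Barron and Hwang--Mok entirely. It takes the maximally destabilizing subsheaf $\sE'\subseteq T_X$ and uses the slope inequality $\mu_{\sL}(\sE')\ge\mu_{\sL}(\det(\sE))/r\ge (r+k)/r$ from \cite[Lemma 6.2]{adk08}, which converts the existence of the section into positivity of $\sE'$. If $f^*\sE'$ is ample one concludes $X\simeq\bP^n$ by Wahl's theorem or \cite[Proposition 2.7]{adk08}; otherwise the splitting type of $f^*T_X$ forces $k=0$, $f^*\sE'\simeq\sO_{\bP^1}(2)\oplus\sO_{\bP^1}(1)^{\oplus r'-2}\oplus\sO_{\bP^1}$, and via \cite[Proposition 2.3]{hwang} one gets $\det(\sE')=\omega_X^{-1}$, whence $h^{n-r'}(X,\sO_X)\ne 0$ forces $r'=n$; the index computation $\omega_X^{-1}\simeq\sL^{\otimes n}$ then yields $X\simeq Q_n$ by Kobayashi--Ochiai. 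If you want to salvage your route, you would need to actually carry out the extraction of $\cC_x$ from $s$; the destabilizing-subsheaf argument is the standard way to sidestep exactly that difficulty.
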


\begin{proof}Let us give the proof following \cite{adk08}.
First notice that $X$ is uniruled by \cite{miyaoka87}, and hence a
Fano manifold with $\rho(X)=1$. The result is clear if $\dim X=1$,
so we assume that $n\geq 2$.  Fix a minimal dominating family $H$ of
rational curves on $X$. Let $\sL$ be an ample line bundle on $X$ such that
$\textup{Pic}(X)=\bZ[\sL]$.
  
Let $\sE'\subset T_X$ be the maximally destabilizing subsheaf of $T_X$; $\sE'$ is a reflexive sheaf of 
rank $r' \ge 1$. By \cite[Lemma 6.2]{adk08}, $\mu_{_\sL}(\sE') \geq
\frac{\mu_{_\sL}(\det(\sE))}{r}.$ 
Notice that $\mu_{_\sL}(\det(\sE)) \ge r+k$ since $\sE$ is ample.
This implies that
$\frac{\deg(f^*\sE')}{r'}\geq \frac{\deg(f^*\det(\sE))}{r}\ge \frac{r+k}{r}\ge 1$ for a general member
$[f]\in H$.  If $r'=1$, then $\sE'$ is ample and we are done
by Wahl's Theorem. 
If $f^*\sE'$ is ample, then $X\simeq \bP^n$ by \cite[Proposition 2.7]{adk08}, using the fact that
$\rho(X)=1$.

Otherwise, as $f^*\sE'$ is a subsheaf of
$f^*T_X\simeq \sO_{\bP^1}(2)\oplus \sO_{\bP^1}(1)^{\oplus d}\oplus
\sO_{\bP^1}^{\oplus (n-d-1)}$ (see \cite[Corollary IV 2.9]{kollar96}), we must have 
$\deg(f^*\det(\sE'))=r'$, $\deg(f^*\det(\sE))=r$, $k=0$ and
$f^*\sE'\simeq \sO_{\bP^1}(2)\oplus
\sO_{\bP^1}(1)^{\oplus r'-2}\oplus \sO_{\bP^1}$ for a general $[f]\in H$.
  Then $\sO_{\bP^1}(2)\subset
f^*\sE'$ for general $[f]\in H$.
Thus by
\cite[Proposition 2.3]{hwang}, 
$(f^*T_X^+)_p
\subset (f^*\sE')_p$ for a general $p\in \bP^1$ and a
general $[f]\in H$.  Since $f^*\sE'$ is a subbundle of $f^*T_X$, we
have an inclusion of sheaves $f^*T_X^+\into f^*\sE'$, and thus
$f^*\det(\sE')=f^*\omega_X^{-1}$.  Since $\rho(X)=1$, this implies
that $\det \sE'=\omega_X^{-1}$, and thus $0\neq
h^{0}(X,\wedge^{r'} T_{X}\otimes \omega_{X})=h^{n-r'}(X,\sO_{X})$.
The latter is zero unless $r'=n$ since $X$ is a Fano manifold. 
Notice that $\deg(f^*\det(\sE))=r$. It follows that, for any $[f]\in H$,
$f^*\sE\simeq\sO_{\bP^1}(1)^{\oplus r}$. By \cite[Proposition 1.2]{andreatta_wisniewski}
(see also \cite[Theorem 4.3]{ross}),
$\sE\simeq \sL^{\oplus r}$ and $\deg(f^*\sL)=1$.
If $n=r'$, then we must have $\omega_X^{-1}\simeq \det(\sE')\simeq \sL^{\otimes n}$.
Hence $X\simeq Q_n$ by \cite{kobayashi_ochiai73}.
\end{proof}

We will need the following auxiliary result.

\begin{lemma}\label{lemma:unsplit}
Let $X$ be a smooth complex projective variety 
and $\sE$ an ample vector bundle on $X$ of rank $r+k$ with $r\ge 2$ and $k\ge 0$. 
Assume that $X$ is uniruled and
fix a minimal dominating family $H$ of
rational curves on $X$.
If $h^0(X,T_{X}^{\otimes r}\otimes \det(\sE)^{\otimes -1})\neq 0$, then
$H$ is unsplit.
\end{lemma}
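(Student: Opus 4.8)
The plan is to argue by contradiction: assuming $H$ is not unsplit, I would derive an upper bound on $\det(\sE)\cdot\ell$ from the section and a lower bound from ampleness together with the breaking of $\ell$, and play them against each other. We may assume $\dim(X)\ge 2$, since for $\dim(X)=1$ one has $X\simeq\bP^1$ and $H$ is trivially proper. For a general member $[f]\in H$ the curve $\ell=f(\bP^1)$ is free, so $f^*T_X\simeq\sO_{\bP^1}(2)\oplus\sO_{\bP^1}(1)^{\oplus d}\oplus\sO_{\bP^1}^{\oplus(\dim(X)-d-1)}$ with $d=-K_X\cdot\ell-2$. As $H$ dominates $X$ and $s\in H^0(X,T_X^{\otimes r}\otimes\det(\sE)^{-1})$ is a fixed nonzero section, a general member avoids its zero locus, so $f^*s\neq 0$; since every line bundle summand of $f^*T_X^{\otimes r}$ has degree at most $2r$, this forces $\det(\sE)\cdot\ell\le 2r$.

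For the lower bound, note that ampleness of $\sE$ gives $\det(\sE)\cdot C\ge r+k$ for the normalization of any rational curve $C$ (an ample bundle of rank $r+k$ on $\bP^1$ has degree at least $r+k$). If $H$ is not unsplit, a general member is numerically equivalent to a connected non-integral $1$-cycle $\sum a_i\ell_i$ with $\sum a_i\ge 2$, whence $\det(\sE)\cdot\ell=\sum a_i(\det(\sE)\cdot\ell_i)\ge 2(r+k)$. Combining the two bounds gives $2(r+k)\le 2r$, which is impossible for $k\ge 1$. This already proves the lemma when $k\ge 1$.

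The main obstacle is the boundary case $k=0$, in which the two estimates are merely compatible and force $\det(\sE)\cdot\ell=2r$, $\sum a_i=2$ and $\det(\sE)\cdot\ell_i=r$ for each component. Here equality in the upper bound means that, for a general member, $f^*s$ lies in the unique top-degree summand $(T_X^+)^{\otimes r}$ of $f^*T_X^{\otimes r}$, where $T_X^+\simeq\sO_{\bP^1}(2)$ is the image of $df$. Hence at a general point $x$ the value $s_x$ is a nonzero $r$-th power $v_x^{\otimes r}$, and any $\ell\ni x$ satisfies $s_x\in(T_{\ell,x})^{\otimes r}$, so $T_{\ell,x}=\langle v_x\rangle$ is independent of $\ell$. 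By Lemma~\ref{lemma:morphism} this yields a fibration $\pi_0:X_0\to Y_0$ whose fibers are members of $H$, with $\dim(Y_0)=\dim(X)-1\ge 1$.

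Finally, I would turn this fibration against itself exactly as in Proposition~\ref{proposition:fibration_over_curve_quadric}. The relative tangent $T_{X_0/Y_0}$ is a line bundle restricting to $\sO_{\bP^1}(2)$, i.e. to $-K$ of the fiber, on each fiber, and $s$ factors through $T_{X_0/Y_0}^{\otimes r}$; this produces an effective $\pi_0$-vertical divisor $D'$ with $\det(\sE)|_{X_0}\equiv -rK_{X_0/Y_0}-D'$. Restricting to $X_C:=\pi_0^{-1}(C)$ for a general complete intersection curve $C\subset Y_0$ and setting $\Delta:=\frac{1}{r}D'|_{X_C}$, the relation becomes $-(K_{X_C/C}+\Delta)\equiv\frac{1}{r}\det(\sE)|_{X_C}$, which is ample, while $(X_C,\Delta)$ is log canonical over the generic point of $C$ because $D'$ is vertical. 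This contradicts Lemma~\ref{lemma:-KX/Y_not_ample} and finishes the case $k=0$. The delicate point throughout is this equality case: the degree count alone is inconclusive, and one must use that the section degenerates onto the minimal tangent directions in order to manufacture the forbidden relatively anti-ample fibration.
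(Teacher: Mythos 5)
Your first half is correct and coincides with the paper's argument: the degree count $2(r+k)\le\det(\sE)\cdot\ell\le 2r$ forces $k=0$ and equality, the section then degenerates onto $(T^+_X)^{\otimes r}$ along a general member, $T_{\ell,x}$ is independent of $\ell\ni x$, and Lemma~\ref{lemma:morphism} produces the fibration $\pi_0:X_0\to Y_0$ with $\det(\sE)_{|X_0}\simeq T_{X_0/Y_0}^{\otimes r}$.

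The gap is in the endgame. Lemma~\ref{lemma:morphism} only gives $\pi_0$ on \emph{some} nonempty open subset $X_0\subseteq X$, with $Y_0$ merely a quasi-projective variety; there is no control on $\codim_X(X\setminus X_0)$ and no reason for $Y_0$ to contain any complete curve (you cannot invoke \cite[Lemma 2.2]{adk08} to extend the quotient in codimension one, since that requires $H$ to be unsplit --- exactly what you are trying to prove). Consequently ``a general complete intersection curve $C\subset Y_0$'' is not available, and even if $C$ were a complete curve in $Y_0$, the scheme $X_C=\pi_0^{-1}(C)$ would a priori be the only projective object around; in general $X_C$ is a non-proper open variety, on which ``$-(K_{X_C/C}+\Delta)$ is ample'' carries no contradiction and Lemma~\ref{lemma:-KX/Y_not_ample} (which requires a normal \emph{projective} variety fibred over a smooth complete curve) simply does not apply. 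Repairing this is the real content of the paper's proof: it takes a general complete intersection curve $C\subset X$, passes to the normalization $S$ of the closure in $X$ of $\pi_0^{-1}(\pi_0(C\cap X_0))$, extends the induced map $\Omega^1_X\to\sL^{\otimes -1}$ (with $\sL$ the saturation of $T_{X_0/Y_0}$ in $T_X$) first to $S$ via \cite[Lemme 1.2]{druel04} and then to a minimal desingularization $\tilde S$ via Burns--Wahl, and derives the contradiction from the surface statement Lemma~\ref{lemma:uniruled_surface} (ultimately $K_{S/B}^2=0$ for ruled surfaces), with the case $\dim(X)=2$ treated separately. Your proposal skips this compactification/extension step entirely, so as written the case $k=0$ is not proved.
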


\begin{proof}The proof is similar to that of \cite[Proposition 4.2]{paris}.
Let $[f] \in H$ be a general member.
Let us assume to the contrary that 
$h^0(X,T_{X}^{\otimes r}\otimes 
\det(\sE)^{\otimes -1})\neq 0$ and 
$f_*(\bP^1)\equiv C_1+C_2$ with $C_1$ and $C_2$ nonzero integral effective rational
$1$-cycles.
Notice first that $\det(\sE)\cdot C\ge r+k$ for all rational curve $C\subset X$.
By \cite[Corollary IV 2.9]{kollar96},
$f^*T_X\simeq \sO_{\bP^1}(2)\oplus \sO_{\bP^1}(1)^{\oplus d}
\oplus\sO_{\bP^1}^{\oplus (n-d-1)}$ and we must have 
$\deg(f^*\det(\sE)) \le 2r$. Finally, $2(r+k)\le \deg(f^*\det(\sE)) \le 2r$ and we must have $k=0$,
$\deg(f^*\det(\sE)) = 2r$ and 
$f^*\det(\sE)\simeq \sO_{\bP^1}(2r)\subset 
f^*\wedge^r(T_X)\simeq 
\wedge^r(\sO_{\bP^1}(2)\oplus \sO_{\bP^1}(1)^{\oplus d}
\oplus\sO_{\bP^1}^{\oplus (n-d-1)})$. 
Hence $T_{\ell,x}^{\otimes r}=\det(\sE)_x\subset T_{X,x}^{\otimes r}$ for
a general point $x$ in $\ell$ and 
therefore, $T_{\ell,x}$
does not depend
on $\ell \ni x$. Thus, by Lemma \ref{lemma:morphism},
there exists a non empty open subset $X_0$ in $X$ and a proper surjective morphism 
$\pi_0:X_0\to Y_0$ onto a variety $Y_0$ such that any fiber 
of $\pi_0$ is a rational curve from the family $H$ and 
$\det(\sE)_{|X_0}\simeq T_{X_0/Y_0}^{\otimes r}$. Let $\sL\subset T_X$ be the saturated line bundle such that $T_{X_0/Y_0}\simeq \sL_{|X_0}$. Notice that
$\det(\sE)\subset \sL^{\otimes r}$ with equality on $X_0$.
Let $C\subset X$ be a general complete intersection curve and let $S$ be the normalization of the closure in $X$ of 
$\pi_0^{-1}(\pi_0(C\cap X_0))$. By \cite[Lemme 1.2]{druel04} (or \cite[Proposition 4.5]{adk08}),
the map $\Omega_X^1 \to \sL^{\otimes -1}$ induces a map 
$\Omega_S^1\to {\sL_S}^{\otimes -1}$ where $\sL_S$ denotes the pull-back of $\sL$ to $S$.
Notice that $\pi_0$ induces a surjective morphism $\pi_S:S\to B$ onto a smooth curve. 
By Lemma \ref{lemma:uniruled_surface}, $\dim(X_0)\neq 2$. Thus, we
may assume $g(B) \ge 1$.
Let $\tilde S \to S$ be a minimal desingularization of $S$. By \cite[Proposition 1.2]{burns_wahl74},
$\Omega_S^1\to {\sL_S}^{\otimes -1}$ extends to
$\Omega_{\tilde S}^1\to {\sL_{\tilde S}}^{\otimes -1}$. Let $\pi_{\tilde S}:\tilde S\to B$ be the induced morphism. 
By replacing $\sL_{\tilde S}$ with its saturation in $T_{\tilde S}$, we may assume
$\det(\sE)_{\tilde S}\subset \sL_{\tilde S}^{\otimes r}\subset T_{\tilde S}^{\otimes r}$.
Observe also that, for a general point $b$ in $B$, 
$\det(\sE)_{\tilde S}\cdot \tilde S_b=2r$. But that contradicts Lemma \ref{lemma:uniruled_surface}.
\end{proof}

Now we can prove our main theorems.

\begin{thm}\label{theorem:main}
Let $X$ be a smooth complex projective variety 
and $\sE$ an ample vector bundle on $X$ of rank $r+k$ with $r\ge 1$ and $k\ge 0$ and such that
$h^0(X,T_{X}^{\otimes r}\otimes \det(\sE)^{\otimes -1})\neq 0$.
\begin{enumerate}
\item If $k\ge 1$ then $X\simeq \bP^{n}$.
\item If $k=0$ then either $X\simeq \bP^{n}$, or 
$X\simeq Q_{n}$.
\end{enumerate}
\end{thm}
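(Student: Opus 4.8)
The plan is to argue by induction on $n=\dim(X)$, the case $n=1$ being clear since then $X\simeq\bP^1\simeq Q_1$. For the inductive step I would first record that $X$ is uniruled, exactly as in Proposition \ref{proposition:picard_number_one}: a nonzero section of $T_X^{\otimes r}\otimes\det(\sE)^{\otimes -1}$ with $\det(\sE)$ ample produces, on a general complete intersection curve, a positive-degree subsheaf of a tensor power of $T_X$, which is incompatible with $\Omega^1_X$ being generically nef, so $X$ is uniruled by Miyaoka \cite{miyaoka87}. Two cases can then be dispatched at once. If $r=1$, the section is a nonzero, hence injective, map $\det(\sE)\hookrightarrow T_X$ out of an ample line bundle, so the saturation of its image is an ample invertible subsheaf of $T_X$ and Wahl's theorem \cite{wahl} gives $X\simeq\bP^n$. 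If $\rho(X)=1$, Proposition \ref{proposition:picard_number_one} yields precisely the desired conclusion. Hence I may assume $r\ge 2$ and $\rho(X)\ge 2$; since $\bP^n$ has Picard number one and so does $Q_n$ for $n\ge 3$, the only admissible outcome left to reach is $X\simeq\bP^1\times\bP^1\simeq Q_2$, which will in particular force $k=0$.

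Next I would fix a minimal dominating family $H$ of rational curves on $X$, which is unsplit by Lemma \ref{lemma:unsplit}, and form the $H$-rationally connected quotient $\pi_0:X_0\to Y_0$. If $Y_0$ were a point then $N_1(X)$ would be generated by the class of $H$ by \cite[Proposition IV.3.13.3]{kollar96}, forcing $\rho(X)=1$; so $\dim(Y_0)\ge 1$ and the quotient is nontrivial. To identify the fibers, I restrict $s$ to a general fiber $F$ of $\pi_0$ (nonzero, since the zero locus of $s$ is not dominant) and use the exact sequence $0\to T_F\to T_X|_F\to N_{F/X}\to 0$ with $N_{F/X}$ trivial: peeling off the horizontal factors yields a nonzero section of $T_F^{\otimes r'}\otimes\det(\sE|_F)^{\otimes -1}$ for some $1\le r'\le r$. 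As $\dim(F)<n$ and $\sE|_F$ is ample, the induction hypothesis applies to $F$ and shows that the geometric generic fiber of $\pi_0$ is isomorphic either to a projective space or to a smooth hyperquadric.

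The heart of the argument is then to convert $\pi_0$ into a fibration over a smooth curve carrying a section of exactly the shape governed by the three results of the previous subsection. Fixing a general complete intersection curve $C$ for $\pi_0^*\Omega^1_{Y_0}$ in the sense of Mehta-Ramanathan, I split according to the positivity of $(\pi_0^*\Omega^1_{Y_0})|_C$. If it is nef, I restrict $\pi_0$ over a general complete intersection curve $C_0\subset Y_0$ (turning $\pi_0$ into an honest $\bP^d$-bundle via Lemma \ref{lemma:extending_in_codim_1} when the fibers are projective spaces) to obtain $\pi_B:X_B\to B$, and I descend $s$ into its vertical part, the horizontal directions contributing a twist $\pi_B^*\sG^*$ with $\sG$ nef precisely because $(\pi_0^*\Omega^1_{Y_0})|_C$ is nef. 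Lemma \ref{lemma:fibration_over_curve_projective_space} (projective fibers) or Proposition \ref{proposition:fibration_over_curve_quadric} (quadric fibers) then forces the corresponding $h^0$ to vanish, contradicting $s\neq 0$. If instead $(\pi_0^*\Omega^1_{Y_0})|_C$ is not nef, then Corollary \ref{corollary:miyaoka} produces a minimal free rational curve $f:\bP^1\to Y_0$; pulling $\pi_0$ back over it gives a projective bundle $\pi_B:X_B\to\bP^1$, and the horizontal twist is now controlled by $f^*T_{Y_0}\simeq\sO_{\bP^1}(2)\oplus\sO_{\bP^1}(1)^{\oplus(\dim Y_0-1)}$, so the relevant bundle $\sG$ satisfies that $\sG^*(2)$ is nef. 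Lemma \ref{lemma:bundle_over_line} then yields $X\simeq\bP^1\times\bP^1$, completing the induction.

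The main obstacle I expect is this descent step: pushing the global section $s$ of $T_X^{\otimes r}\otimes\det(\sE)^{\otimes -1}$ down to a section on $X_B$ of exactly the twisted form required by each of Lemma \ref{lemma:bundle_over_line}, Lemma \ref{lemma:fibration_over_curve_projective_space} and Proposition \ref{proposition:fibration_over_curve_quadric}, while correctly identifying the horizontal bundle $\sG$ and establishing the sharp positivity statement ($\sG$ nef, respectively $\sG^*(2)$ nef) from the behaviour of $\pi_0^*\Omega^1_{Y_0}$ along $C$. Keeping track of the filtration of $T_{X_B}^{\otimes r}$ by powers of the relative tangent bundle, and checking that the component of $s$ surviving in the appropriate graded piece is nonzero, is the delicate bookkeeping on which the whole reduction rests; matching the quadric-fiber case to the nef alternative, so that only the projective-bundle case can produce the base $\bP^1$, is the subtle point that isolates $\bP^1\times\bP^1$ as the unique outcome with $\rho(X)\ge 2$.
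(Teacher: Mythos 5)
Your overall strategy coincides with the paper's: induction on $\dim(X)$, reduction to $\rho(X)\ge 2$ via Proposition \ref{proposition:picard_number_one}, the unsplit family $H$ and its rationally connected quotient $\pi_0:X_0\to Y_0$, identification of the general fiber by induction, and the dichotomy on the nefness of $(\pi_0^*\Omega^1_{Y_0})|_C$ feeding into Lemma \ref{lemma:fibration_over_curve_projective_space}, Proposition \ref{proposition:fibration_over_curve_quadric} and Lemma \ref{lemma:bundle_over_line}. (Your dispatch of all of $r=1$ by Wahl, rather than only $r+k=1$, is a harmless and even convenient variant, since it lets you assume $r\ge 2$ before invoking Lemma \ref{lemma:unsplit}.) However, there are two genuine gaps in your endgame. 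First, in the non-nef branch, Lemma \ref{lemma:bundle_over_line} concludes that the \emph{pullback} $X_f:=\bP^1\times_{Y_0}X_0$ over the free curve $f:\bP^1\to Y_0$ is $\bP^1\times\bP^1$ with $k=0$, $d=1$ and $\det(\sE)|_{X_f}\simeq\sO_{\bP^1}(2)\boxtimes\sO_{\bP^1}(2)$ --- it says nothing yet about $X$ itself, which at this stage is only known to be a $\bP^1$-fibration over some $Y_0$ of arbitrary dimension. The paper closes this by extracting from the second ruling of $X_f$ a new unsplit dominating family $H'$ on $X$, passing to the $(H,H')$-rationally connected quotient, and re-running the whole argument; only this iteration isolates $X\simeq\bP^1\times\bP^1$. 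Your proposal jumps from ``the pullback is $\bP^1\times\bP^1$'' to ``$X\simeq\bP^1\times\bP^1$, completing the induction,'' which does not follow.

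Second, your case division mishandles the quadric fibers. Corollary \ref{corollary:miyaoka} (which produces the minimal free curve in $Y_0$) and Lemma \ref{lemma:bundle_over_line} (which concerns $\bP^d$-bundles over $\bP^1$) both require the fibers of $\pi_0$ to be projective spaces, so if the geometric generic fiber is a quadric and $(\pi_0^*\Omega^1_{Y_0})|_C$ is not nef, your plan has no branch to enter. You flag this as ``the subtle point'' but do not resolve it. The resolution in the paper is arithmetic, not geometric: writing the nonvanishing as $h^0(X_0,T_{X_0/Y_0}^{[\otimes i]}\otimes\det(\sE)|_{X_0}^{\otimes -1}\otimes\pi_0^*T_{Y_0}^{\otimes j})\neq 0$ with $i+j=r$, the induction hypothesis yields a quadric fiber only when $i=r$ and $k=0$; then $j=0$, the horizontal twist is trivial (hence trivially nef), and Proposition \ref{proposition:fibration_over_curve_quadric} applies with $\sG=\sO_B$ regardless of the positivity of $\pi_0^*\Omega^1_{Y_0}$. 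This is why the paper's dichotomy is ``$(\pi_0^*\Omega^1_{Y_0})|_C$ nef \emph{or} $i=r$'' rather than your ``nef or not nef,'' and why only the case $i<r$ with $F\simeq\bP^d$ ever reaches Corollary \ref{corollary:miyaoka} and Lemma \ref{lemma:bundle_over_line}. A minor further caution: the curve over which you restrict must be complete, which the paper arranges by taking a complete intersection curve inside $X_0$ (whose complement in $X$ has codimension $\ge 2$) and pushing it down, not by taking a complete intersection curve directly in the possibly non-proper $Y_0$.
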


\begin{proof}
We shall proceed by induction on $n:=\dim(X)$.
The result is clear if $n=1$, so we assume that $n\geq 2$.
If $r+k=1$ then we are done by Wahl's Theorem so we assume that $r+k\ge 2$.
Notice that $X$ is uniruled by \cite{miyaoka87}.
Fix a minimal dominating family $H$ of
rational curves on $X$. By Lemma \ref{lemma:unsplit}, $H$ is unsplit.
Let $\pi_0:X_0\to Y_0$ be the $H$-rationally connected quotient of $X$.
By \cite[Lemma 2.2]{adk08}, we may assume 
$\codim_X(X\setminus X_0)\ge 2$ and $\pi_0$ is an equidimensional surjective morphism
with integral fibers. By shrinking $Y_0$ if necessary, we may also assume that $Y_0$ is 
smooth.

By Proposition \ref{proposition:picard_number_one}, we may assume
$\rho(X) \ge 2$. By \cite[Proposition IV 3.13.3]{kollar96}, we must have $\dim(Y_0)\ge 1$. 

Let $F$ be a general fiber of $\pi_0$.
There exist (see \cite[Lemma 5.1]{adk08} or \cite[Lemme 2.1]{paris}) non negative 
integers $i$ and $j$ with $i+j=r$ such that 
$h^0(X,T_{X_0/Y_0}^{[\otimes i]}\otimes \det(\sE)_{|X_0}^{\otimes -1}
\otimes \pi_0^* T_{Y_0}^{\otimes j}) \neq 0$ and
$h^0(F,T_F^{\otimes i}\otimes \det(\sE)_{|F}^{\otimes -1})\neq 0$. Notice that $i\ge 1$ since
$\det(\sE)_{|F}$ is an ample line bundle and $d:=\dim(F)\ge 1$. The induction hypothesis implies
that $F\simeq \bP^d$ if $i<r$ or $k\ge 1$ and 
either $F\simeq \bP^d$ or $F\simeq Q_d$
if $i=r$ and $k=0$.

Let $C\subset X_0$ be
a general complete intersection curve (with respect to some very ample line bundle on $X$).
Let $X_C$ be the normalization of $\pi_0^{-1}(\pi_0(C))$.
Let $\pi_C:X_C \to C$ be the induced map.
Notice that $X_C$ is the normalization of $C \times _{Y_0} X_0$ and that
$C \times _{Y_0} X_0$ is regular in codimension one. Hence, we must have
$h^0(X_C,T_{X_C/C}^{[\otimes i]}\otimes \det(\sE)_{|X_C}^{\otimes -1}
\otimes \pi_C^*( {\Omega_{Y_0}^1}_{|C}^{\otimes -j})) \neq 0$.
Let us assume that either $(\pi_0^*\Omega_{Y_0}^1)_{|C}$ is a nef vector bundle or $i=r$.
If the geometric generic fiber of $\pi_0$ is isomorphic to a projective space then 
$\pi_0$ is a $\bP^d$-bundle by Lemma
\ref{lemma:extending_in_codim_1}. But that contradicts
Lemma \ref{lemma:fibration_over_curve_projective_space}. Thus
the geometric generic fiber of $\pi_0$ is isomorphic to a (smooth) hyperquadric.
But that contradicts Proposition \ref{proposition:fibration_over_curve_quadric}.

Thus $i<r$, $F\simeq \bP^d$ and
by Lemma \ref{corollary:miyaoka},
there exists a minimal free morphism $f:\bP^1\to Y_0$. By generic smoothness, we may assume that
$X_f:=\bP^1\times_{Y_0} X_0$ is smooth. We may also assume that
$h^0(X_f,T_{X_f/\bP^1}^{[\otimes i]}\otimes \det(\sE)_{|X_f}^{\otimes -1}
\otimes \pi_f^*({T_{Y_0}}_{|\bP^1}^{\otimes j})) \neq 0$.
Let $\sL_f$ be a line bundle on $X_f$ that restricts to $\sO_{\bP^d}(1)$ on $F\simeq \bP^d$ (see the proof of
Lemma \ref{lemma:fibration_over_curve_projective_space}).
By \cite[Corollary 5.4]{fujita75}, $\pi_f:X_f\to \bP^1$ is a $\bP^d$ bundle.
It follows from Lemma \ref{lemma:bundle_over_line} that $k=0$, $d=1$, 
$(X_f/\bP^1)\simeq (\bP^1\times \bP^1/\bP^1)$ and 
$\det(\sE)_{|X_f}\simeq\sO_{\bP^1}(2)\boxtimes\sO_{\bP^1}(2)$.
Since $\sE$ is ample, $X$ admits an unsplit dominating covering family $H'$ of rational curves whose general member
corresponds to a ruling of $X_f$ that is not contracted by $\pi$.
Let $\pi_1:X_1\to Y_1$ be the $(H,H')$-rationally connected quotient of $X$. 
By \cite[Lemma 2.2]{adk08}, we may assume 
$\codim_X(X\setminus X_1)\ge 2$ and $\pi_1$ is an equidimensional surjective morphism
with integral fibers. By shrinking $Y_1$ if necessary, we may also assume that $Y_1$ is 
smooth. Replacing $\pi_0:X_0\to Y_0$ with $\pi_1:X_1\to Y_1$ above, we obtain a contradiction unless $X\simeq \bP^1\times \bP^1$.
\end{proof}

\begin{proof}[Proof of Theorem A]
By Theorem \ref{theorem:main},
$X\simeq \bP^{n}$
and by Lemma \ref{lemma:twisted_sections_projective_space},
$\det(\sE)\simeq \sO_{\bP^{n}}(l))$ with
$r+k\le l\le \frac{r(n+1)}{n}$.
\end{proof}

\begin{proof}[Proof of Theorem B]
By Theorem \ref{theorem:main},
either $X\simeq \bP^{n}$ or $X\simeq Q_{n}$. If $X\simeq \bP^{n}$, then the claim follows from 
Lemma \ref{lemma:twisted_sections_projective_space}. Let us assume 
$X\simeq Q_{n}$. By Lemma \ref{lemma:quadric_twisted_sections},
$\det(\sE)\simeq \sO_{Q_n}(r)$. Thus, for any line $\bP^1\subset Q_n\subset \bP^{n+1}$,
$\sE_{|\bP^1}\simeq \sO_{\bP^1}(1)^{\oplus r}$, and the claim follows
from \cite[Proposition 1.2]{andreatta_wisniewski} (see also \cite[Theorem 4.3]{ross}).
\end{proof}

\providecommand{\bysame}{\leavevmode\hbox to3em{\hrulefill}\thinspace}
\providecommand{\MR}{\relax\ifhmode\unskip\space\fi MR }
\providecommand{\MRhref}[2]{%
  \href{http://www.ams.org/mathscinet-getitem?mr=#1}{#2}
}
\providecommand{\href}[2]{#2}

\end{document}